\renewcommand{\mathbb}[1]{\mathbbm{#1}}
\newcommand{\keyword}[1]{\textbf{#1}}
\newcommand{\N}{\mathbb{N}}
\newcommand{\Z}{\mathbb{Z}}
\newcommand{\id}{\mathrm{id}}
\newcommand{\R}{\mathbb{R}}
\newcommand{\supp}{\mathop{\mathrm{supp}}}
\newcommand{\after}{\mathbin{\circ}}
\newcounter{main}
\theoremstyle{plain}
\newtheorem{lem}[main]{Lemma}
\newtheorem{thm}[main]{Theorem}
\newtheorem{prop}[main]{Proposition}
\newtheorem{cla}[main]{Claim}
\theoremstyle{definition}
\newtheorem{dfn}[main]{Definition}
\newtheorem{exa}[main]{Example}
\title{Yosida duality}
\author{Bas Westerbaan \\ 
{\small Radboud Universiteit Nijmegen} \\
{\small \texttt{bas@westerbaan.name}}}
\begin{document}

\maketitle

\begin{abstract}
    In this note we prove Yosida duality ---
        that is: the category
        of compact Hausdorff spaces with continuous maps
        is dually equivalent
        to the category of uniformly complete Archimedean Riesz spaces
            with distinguished units and unit-preserving Riesz homomorphisms
            between them.
\end{abstract}

For a compact Hausdorff space~$X$, write~$C(X)$
    for the vector space of real-valued continuous functions on it.
The space~$C(X)$ contains sufficient additional structure to reconstruct
$X$ from it.  In fact, there are several ways to do it.

One particularly well-known method is using Gel'fand-duality:
    it states that any commutative unital C$^*$-algebra
    is isomorphic to the algebra of
    \emph{complex valued} continuous functions on
        some up-to-isomorphism unique compact Hausdorff spaces.
The categorical corollary is that the category
    of unital C$^*$-algebras with unital $*$-homomorphisms
    is dually equivalent to that of compact Hausdorff spaces.

The proof of Gel'fand's Theorem is rather involved ---
    in this note we will present a lesser known method
    due to Yosida\cite{yosida} which will lead to a different duality.
Gel'fand uses the product on~$C(X)$
    to reconstruct~$X$.  Yosida instead uses the lattice order.
At the end of this note,
    we will have shown that the category
        of compact Hausdorff spaces
        is dually equivalent
        to the category of uniformly complete Archimedean unitary Riesz spaces.

The present proof is based
    on a lecture series given by A.~van Rooij in 2011
    of which notes can be found here\cite{riesz}.
The present text takes a short path to Yosida duality;
    for a broad treatment of Riesz spaces,
    see e.g.~\cite{riesz,rooij,zaanen}.

\section{Riesz spaces}
We will first define Riesz spaces and derive their elementary theory.
\begin{dfn}
\begin{enumerate}
    \item
A real \keyword{ordered vector space}~$E$ is
    a real vector space with a partial order~$\leq$ such
    that for all~$x,y\in E$ with~$x \leq y$ we have
\begin{enumerate}
\item $x + a \leq y+a$ for any~$a\in E$ and
\item $\lambda \cdot x \leq \lambda \cdot y$ for all scalars $\lambda \geq 0$.  
\end{enumerate}
\item
A \keyword{Riesz space} is a real ordered vector space that is a lattice
(i.e.~all finite infima and suprema exist.)
\item
A linear map~$\varphi \colon E \to E'$ between Riesz spaces
is called a \keyword{Riesz homomorphism}
if~$\varphi$ preserves finite infima and suprema.
\end{enumerate}
\end{dfn}
\begin{exa}
    \begin{enumerate}
    \item 
For any topological space~$X$,
    the vector space~$C(X)$ of continuous real-valued functions on~$X$ is
    a Riesz space.
\item
    Let~$\leq_{\mathrm{lex}}$ be the lexicographic order on~$\mathbb{R}^2$ ---
    that is
        \begin{equation*}
            (x,y) \leq_{\mathrm{lex}} (x',y') \quad \iff \quad
            x < x' \text{ or } (x=x' \text{ and } y \leq y').
        \end{equation*}
    Then~$\mathbb{R}^2$ ordered by~$\leq_{\mathrm{lex}}$
        is a Riesz space.
    \end{enumerate}
\end{exa}
In a Riesz space
the lattice operations interact nicely with
other structure:
\begin{lem}\label{lem:basics}
    For a Riesz space~$E$ with elements~$x,y,a \in E$ it holds
    \begin{enumerate}
        \item $\lambda (x \vee y) = (\lambda x) \vee (\lambda y)$
                for scalars $\lambda \geq 0$
        \item $-(x \vee y) = (-x) \wedge (-y)$
        \item $(x+a) \vee (y+a) = (x \vee y) + a$
        \item $x \vee y + x \wedge y = x+y$
        \item $(x \wedge y)\vee a = (x \vee a) \wedge (y \vee a)$
    \end{enumerate}
\end{lem}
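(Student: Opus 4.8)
The plan is to obtain parts (1)--(3) from one uniform observation, derive (4) from (2) and (3), and spend the actual effort only on (5). For a scalar $\lambda > 0$ the map $z \mapsto \lambda z$ is an order automorphism of $E$: it is monotone by the axioms, and its inverse $z \mapsto \lambda^{-1} z$ is too. Likewise the translation $z \mapsto z + a$ is an order automorphism, with inverse $z \mapsto z - a$, and negation $z \mapsto -z$ is an order anti-automorphism because $x \leq y$ iff $-y \leq -x$. An order automorphism carries the join of a pair to the join of the images, and an anti-automorphism carries it to the meet --- both are immediate from the universal property of $\vee$ and $\wedge$. This settles (2), (3), and, together with the trivial case $\lambda = 0$ (both sides $0$), also (1); and combining the same observation with (2) gives the $\wedge$-analogues of (1) and (3), which I will use below.

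For (4), I would apply (3) with the translation by $x + y$ to the pair $-x, -y$, obtaining $(x + y) + \bigl((-x) \vee (-y)\bigr) = y \vee x$. Since $(-x) \vee (-y) = -(x \wedge y)$ by (2), rearranging yields $x \vee y + x \wedge y = x + y$.

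Part (5) is the substantive one. The inequality $(x \wedge y) \vee a \leq (x \vee a) \wedge (y \vee a)$ holds in every lattice, so I would dispose of it first and concentrate on the reverse inequality. Set $p := (x \vee a) \wedge (y \vee a)$; then $a \leq p \leq x \vee a$, so $p \vee x$ is an upper bound of $\{x, a\}$ with $p \vee x \leq x \vee a$, forcing $p \vee x = x \vee a$. Now (4) converts this into an equation: $p \wedge x = p + x - (x \vee a) = p + (x \wedge a) - a$, and symmetrically $p \wedge y = p + (y \wedge a) - a$. Meeting these two identities --- using $(p \wedge x) \wedge (p \wedge y) = p \wedge (x \wedge y)$ and pulling the common translation outside with (3) --- gives $p \wedge (x \wedge y) = \bigl((x \wedge y) \wedge a\bigr) + p - a$. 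Feeding this back through (4) applied to the pair $p$ and $x \wedge y$ yields $p \vee (x \wedge y) = (x \wedge y) \vee a$, and hence $p \leq (x \wedge y) \vee a$, which is what we wanted.

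I expect (5) to be the only real obstacle. Unlike (1)--(4) it cannot be reduced to monotone bijections, and a naive lattice-theoretic attack fails because lattices need not be distributive; the key is to upgrade the one-sided comparisons among $p$, $x \vee a$ and $y \vee a$ into honest equalities by means of (4), after which the vector space structure finishes the job. Keeping that bookkeeping straight is the one place where care is needed.
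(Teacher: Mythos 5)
Your proof is correct and follows essentially the same strategy as the paper: parts (1)--(3) via order (anti-)automorphisms, (4) from (2) and (3), and the hard direction of (5) by using (4) to trade lattice operations for vector arithmetic. The only difference is cosmetic: where the paper chains inequalities to show $(x\vee a)\wedge(y\vee a) + x\wedge y\wedge a - a \leq x\wedge y$, you derive the exact identities $p\vee x = x\vee a$ and $p\wedge(x\wedge y) = p + x\wedge y\wedge a - a$ and feed them back through (4), which is a slightly stronger but equivalent bookkeeping of the same computation.
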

\begin{proof}
    The maps~$x \mapsto \lambda x$ and~$x \mapsto x+a$ are order isomorphisms
        (if~$\lambda > 0$)
        and these preserve suprema, hence~1 and 3.
    Negation is an order anti-isomorphism and so 2.
    Point 4 follows from the previous two:
    \begin{equation*}
    (x \vee y) - x - y
    \stackrel{3}{=} (x - x - y) \vee ( y- x- y) = (-y)\vee (-x)
            \stackrel{2}{=} - (x \wedge y).
    \end{equation*}
    In every lattice~$(x \wedge y) \vee a \leq (x \vee a) \wedge (y \vee a)$.
    For the other inequality, note
    \begin{equation*}
    (x \vee a) \wedge (y \vee a) \leq x \vee a
    \stackrel{4}{=} x+a - x\wedge a \leq x+a - x\wedge y \wedge a
    \end{equation*}
    and so~$(x \vee a)\wedge(y \vee a) + x\wedge y \wedge a -a \leq x$.
    Similarly~$(x \vee a)\wedge(y \vee a) + x\wedge y \wedge a -a \leq y$.
    And so~$(x \vee a)\wedge(y \vee a) + x\wedge y \wedge a -a \leq x\wedge y$.
    \begin{equation*}
    (x \vee a)\wedge(y \vee a)
        \leq x \wedge y + a - x\wedge y \wedge a
        \stackrel{4}{=} (x\wedge y) \vee a. \qedhere
    \end{equation*}
\end{proof}
The lattice-order on a Riesz space gives quite some structure.
\begin{dfn}
For any Riesz space~$E$, define
\begin{enumerate}
\item
    the \keyword{absolute value} $|x| := x \vee (-x)$ for $x,y \in E$;
\item
    the \keyword{positive cone} $E^+ := \{ e \in E; e \geq 0\}$;
\item
    \keyword{orthogonality} $x \perp y :\Leftrightarrow |x| \wedge |y| = 0$
    for~$x,y\in E$ \emph{and}
\item
    the \keyword{positive} 
        $x^+ := x \vee 0$ and \keyword{negative parts}
        $x^- := (-x) \vee 0$.
\end{enumerate}
\end{dfn}
Note that in~$C(X)$ the absolute value is point-wise;
    $C(X)^+$ are the positive valued functions
    \emph{and} $f \perp g$ iff $f$ and~$g$ have disjoint support.
\begin{lem}
    For any Riesz space~$E$ with~$x,y \in E$ we have
    \begin{multicols}{2}
    \begin{enumerate}
        \item $x = x^+ - x^-$
        \item $|x| = x^+ + x^-$
        \item $|x| \geq 0$
        \item $|x| = x^+ \vee x^-$
        \item $x^+ \perp x^-$
        \item $|\lambda x| = |\lambda||x|$
        \item $|x|=0 \implies x=0$
        \item $(x+y)^+ \leq x^+ + y^+$
        \item $|x+y| \leq |x| + |y|$
        % \item $x-x \wedge y \perp y - x\wedge y$
        \item $x \vee y = \frac{x+y}{2} + \left|\frac{x-y}{2}\right|$
    \end{enumerate}
    \end{multicols}
\end{lem}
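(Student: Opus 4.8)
The plan is to prove the ten items in (almost) the listed order, each one following in a line or two from the items already established together with Lemma~\ref{lem:basics}; nothing here is deep, so the only thing to get right is to arrange the order so that no item is invoked before it is proved.

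For~(1) I combine parts~2 and~4 of Lemma~\ref{lem:basics}: since $-x^- = -\big((-x)\vee 0\big) = x \wedge 0$, we get $x^+ - x^- = (x\vee 0) + (x\wedge 0) = x$. For~(2) I use~(1) in the form $x^+ = x + x^-$ together with positive homogeneity and translation invariance of~$\vee$: $x^+ + x^- = x + 2x^- = x + \big(0 \vee (-2x)\big) = x \vee (-x) = |x|$. Item~(3) is then immediate because $x^+,x^- \ge 0$, and~(4) follows from $x^+ \vee x^- = (x \vee 0) \vee \big((-x)\vee 0\big) = |x| \vee 0 = |x|$, the last step using~(3).

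For~(5) I would first record the elementary fact that $a \ge 0$ implies $|a| = a$ (since then $-a \le 0 \le a$), so that $x^+ \perp x^-$ just means $x^+ \wedge x^- = 0$; now applying Lemma~\ref{lem:basics}(4) to $x^+,x^-$ and using~(2) and~(4) gives $|x| + x^+ \wedge x^- = x^+ + x^- = |x|$, whence $x^+\wedge x^- = 0$. For~(6) I split on the sign of~$\lambda$: for $\lambda \ge 0$ it is positive homogeneity of~$\vee$, and for $\lambda < 0$ it reduces to that case via $|{-x}| = |x|$, which is clear from the definition since~$\vee$ is symmetric. For~(7), from $x \vee (-x) = |x| = 0$ one reads off $x \le 0$ and $-x \le 0$, so $x = 0$ by antisymmetry.

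Items~(8) and~(9) are both ``the right-hand side is an upper bound'' arguments: $x + y \le x^+ + y^+$ and $0 \le x^+ + y^+$, so $x^+ + y^+ \ge (x+y)\vee 0$; likewise $x + y \le |x| + |y|$ and $-(x+y) \le |x| + |y|$, so $|x| + |y| \ge |x + y|$. Finally~(10) is a direct computation: by~(6) and parts~1 and~3 of Lemma~\ref{lem:basics},
\[
    \tfrac{x+y}{2} + \left|\tfrac{x-y}{2}\right|
    = \tfrac12\Big( (x+y) + \big( (x-y) \vee (y-x) \big) \Big)
    = \tfrac12\big( (2x) \vee (2y) \big)
    = x \vee y .
\]
I do not expect any genuine obstacle; the fussiest points are massaging $x^+ + x^-$ into $x \vee (-x)$ in~(2) and remembering the $|a| = a$ observation that~(5) needs to pass from the identity $x^+\wedge x^- = 0$ to a statement about orthogonality.
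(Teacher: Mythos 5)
Your proof is correct and follows essentially the same route as the paper: each item is derived in order from Lemma~\ref{lem:basics} and the previously established items, with only cosmetic variations (e.g.\ computing $x^+ + x^- = x + 2x^-$ instead of $2x^+ - x$, and bounding $|x+y|$ directly as a supremum rather than via $(x+y)^-$). Your explicit observation that $a \geq 0$ implies $|a| = a$, needed to read $x^+ \perp x^-$ as $x^+ \wedge x^- = 0$, is a small point the paper glosses over.
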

\begin{proof}
    As~$x = x + 0 = x \vee 0 + x \wedge 0 =
                    x \vee 0 - (-x) \vee 0 =
                x^+ - x^-$ we have 1. So
    \begin{equation*}
    x^+ + x^- = 2x^+ - x = 2(x \vee 0) - x
        =   (2x) \vee 0 - x = (2x-x) \vee (-x)
        = |x|
    \end{equation*}
    hence 2.
    Clearly~$0 \leq 0 \vee x = x^+$ and similarly~$0 \leq x^-$.
        So by the previous point ~$0 \leq x^+ + x^- = |x|$
            hence 3.
    So~$|x| = |x| \vee 0 = x \vee (-x) \vee 0
                =(x \vee 0) \vee ((-x) \vee 0)
                = x^+ \vee x^-$, which is point 4.
    Also~$x^+ + x^- = x^+ \vee x^- + x^+ \wedge x^-$
        thus we must have~$x^+ \wedge x^- = 0$ viz point 5.
    For~$\lambda \geq 0$
        we have~$|\lambda| |x| = \lambda (x \vee (-x))
        = (\lambda x) \vee (- \lambda x) = |\lambda x|$
        and as~$|x| = |-x|$
        we establish 6.
    Note~$-|x| \leq x \leq |x|$ and so
        if~$|x| =0$ we have~$x=0$ i.e.~point 7.
    Always~$x \leq x^+$.
    So~$x+y \leq x^+ + y^+$.
    Hence~$(x+y)^+ \leq (x^++y^+)^+ = x^++y^+$ so 8.
    Similarly~$(x+y)^- \leq x^- + x^-$.
    So 9 follows:~$|x+y| \leq |x| + |y|$.
    From Lemma~\ref{lem:basics} point~3 it follows
    \begin{equation*}
        \begin{split}
        x+y + \left|x-y\right|
        &= x+y +( x-y) \vee (y-x)\\
        & = (x+y+x-y) \vee (x+y+y-x)
        = 2(x \vee y),
        \end{split}
    \end{equation*}
    which (together with pt.~2) shows pt.~10 and completes the proof.
    % To prove 10, first note~$x \wedge y \leq x$
    %     so~$x-x\wedge y \geq 0$.
    % Hence~$
    %     |x- x\wedge y | \wedge |y - x\wedge y|
    %     = (x- x\wedge y ) \wedge (y - x\wedge y)
    %     =(x \wedge y) - (x \wedge y) = 0$.
\end{proof}
\begin{lem}[Riesz Decomposition Lemma]
    Let~$E$ be any Riesz space.
   For any positive~$x,a,b \in E^+$ with~$x \leq a+b$
    there is a decomposition~$x = a'+b'$
    with~$0 \leq a' \leq a$ and  $0 \leq b' \leq b$.
\end{lem}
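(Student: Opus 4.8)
The plan is to make the obvious greedy choice for $a'$ and let $b'$ absorb the slack. Concretely, I would set $a' := x \wedge a$ and $b' := x - a'$. Then $a' \le a$ and $a' \le x$ are immediate from the definition of infimum, and $a' \ge 0$ follows because $0$ is a lower bound of $\{x,a\}$ (both are in $E^+$), hence $0 \le x \wedge a$. So the first half of the required bounds, $0 \le a' \le a$, costs nothing, and $a' + b' = a' + (x-a') = x$ by construction. Everything then reduces to showing $0 \le b' \le b$.

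For that I would first put $b'$ in closed form. Since negation is an order anti-isomorphism (Lemma~\ref{lem:basics}, pt.\ 2) we have $x \wedge a = -\bigl((-x)\vee(-a)\bigr)$, and adding $x$ and using translation invariance of suprema (Lemma~\ref{lem:basics}, pt.\ 3) gives
\[
b' \;=\; x - x\wedge a \;=\; x + \bigl((-x)\vee(-a)\bigr) \;=\; (x-x)\vee(x-a) \;=\; 0 \vee (x-a).
\]
From this expression $b' \ge 0$ is clear. For $b' \le b$, note that $b$ is an upper bound of both $0$ (as $b \in E^+$) and $x-a$ (adding $-a$ to both sides of $x \le a+b$), so the supremum $0 \vee (x-a)$ lies below $b$. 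This completes the decomposition.

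I don't expect a genuine obstacle here: the only step requiring any thought is the identity $x - x\wedge a = 0 \vee (x-a)$, and that is a one-line consequence of the interaction laws already proved in Lemma~\ref{lem:basics}; the rest is bookkeeping once the choice $a' = x \wedge a$ is fixed. (Symmetrically one could instead take $b' = x \wedge b$, but there is no need.)
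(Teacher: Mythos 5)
Your proposal is correct and uses the same decomposition as the paper ($a' = x\wedge a$, $b' = x - x\wedge a$); your identity $b' = 0\vee(x-a)$ is just a translation of the paper's $b' = x\vee a - a$, and the verification of $b'\le b$ is the same computation in slightly different clothing. No gaps.
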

\begin{proof}
    Define~$a' := x \wedge a$ and~$b' := x - x \wedge a$.
    Clearly~$x = a'+b'$ and~$a' \leq a$.
    We have to show~$b' \leq b$.
    Note~$b' = x - x \wedge a = x \vee a - a$.
    By assumption~$x \leq a+b$
        and so~$x \vee a \leq (a+b)\vee a = a+b$
        hence~$b' \equiv x \vee a - a \leq b$.
\end{proof}

\section{Riesz subspaces and ideals}
To construct Riesz homomorphisms into~$\R$
    we study Riesz ideals.
\begin{dfn}
Let~$E$ be a Riesz space. A subset~$D\subseteq E$ is called
a \keyword{Riesz ideal} if it is the kernel of some Riesz homomorphism.
The ideal~$D$ is said to be \keyword{proper}
    when~$D \neq E$.
The set~$D \subseteq E$ is called a \keyword{Riesz subspace}
    if~$x\vee y, x\wedge y \in D$ whenever~$x,y\in D$.
\end{dfn}
\begin{lem}
    A linear subspace~$D \subseteq E$ is a Riesz subspace
        if and only if~$x^+ \in D$ whenever~$x \in D$.
\end{lem}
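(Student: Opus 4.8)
The plan is to prove both implications directly, using only that $D$ is a linear subspace (so $0 \in D$ and $D$ is closed under negation and sums) together with the identities already established for the positive part and the supremum.

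For the forward implication, suppose $D$ is a Riesz subspace and $x \in D$. Since $D$ is a linear subspace, $0 \in D$, and since $D$ is closed under $\vee$, we get $x^+ = x \vee 0 \in D$. This is immediate and requires no further work.

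For the converse, assume $D$ is a linear subspace with $x^+ \in D$ whenever $x \in D$. First I would observe that $D$ is then closed under absolute values: for $z \in D$ we have $-z \in D$, so $z^- = (-z)^+ \in D$, and by point~2 of the preceding lemma $|z| = z^+ + z^- \in D$. Now take $x, y \in D$. Then $\tfrac{x-y}{2} \in D$, so $\bigl|\tfrac{x-y}{2}\bigr| \in D$, and $\tfrac{x+y}{2} \in D$; applying point~10 of the preceding lemma gives
\begin{equation*}
    x \vee y = \frac{x+y}{2} + \left|\frac{x-y}{2}\right| \in D.
\end{equation*}
Finally $x \wedge y = x + y - (x \vee y) \in D$ by linearity (alternatively, via point~4 of Lemma~\ref{lem:basics}), so $D$ is a Riesz subspace.

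There is no real obstacle here: the only mild subtlety is noticing that the hypothesis on positive parts, combined with closure under negation, already yields closure under $|\cdot|$, after which the arithmetic expression for $x \vee y$ from the previous lemma finishes the argument mechanically.
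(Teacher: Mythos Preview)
Your proof is correct and follows essentially the same approach as the paper's: both directions match, and the converse uses exactly the identities $|x| = x^+ + (-x)^+$ and $x \vee y = \tfrac{x+y}{2} + \bigl|\tfrac{x-y}{2}\bigr|$ that the paper invokes. Your version is simply more explicit in spelling out the intermediate closure under $|\cdot|$ and the derivation of $x \wedge y$.
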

\begin{proof}
    If~$D$ is a Riesz subspace, then clearly~$x^+ = x \vee 0 \in D$
        whenever~$x \in D$.
    The converse follows
    from~$x \vee y = \frac{x+y}{2} + \left|\frac{x-y}{2}\right|$
        and~$|x| = x^+ + (-x)^+$.
\end{proof}
\begin{lem}\label{lem:rieszideal}
Given a Riesz space~$E$ and a~$D \subseteq E$, then
TFAE:
\begin{enumerate}
\item
$D$ is a Riesz ideal.
\item
$D$ is a linear subspace and for all~$a \in D$ and~$x \in E$:
\[ \text{if~$|x| \leq |a|$, then~$x \in D$. } \]
\item
$D$ is a Riesz subspace and for all~$a \in D^+$ and~$x \in E^+$:
\[ \text{if~$x \leq a$, then~$x \in D$. } \]
\end{enumerate}
\end{lem}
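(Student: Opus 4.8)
The plan is to prove the cycle $(1)\Rightarrow(2)\Rightarrow(3)\Rightarrow(1)$. The implication $(1)\Rightarrow(2)$ is the easy direction: if $D = \ker\varphi$ for a Riesz homomorphism $\varphi$, then $D$ is a linear subspace, and if $|x|\le|a|$ with $a\in D$, then applying $\varphi$ — which preserves absolute values since it preserves the lattice operations — gives $0\le|\varphi x|\le|\varphi a| = 0$, so $\varphi x = 0$ by point~7 of the second lemma, whence $x\in D$.

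For $(2)\Rightarrow(3)$, I would first check $D$ is a Riesz subspace: given $x\in D$, we have $|x^+|\le|x|$ (since $0\le x^+\le|x|$), so $x^+\in D$ by~(2), and then the previous lemma applies. The ideal-like condition in~(3) is immediate: if $0\le x\le a$ with $a\in D^+$, then $|x| = x\le a = |a|$, so $x\in D$ by~(2).

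The implication $(3)\Rightarrow(1)$ is the substantive one, and I expect it to be the main obstacle. Assuming~(3), I need to manufacture a Riesz homomorphism with kernel exactly $D$. The natural candidate is the quotient map $E\to E/D$, so the work is to equip $E/D$ with a Riesz space structure making the quotient map a Riesz homomorphism with the right kernel. The plan is to define an order on $E/D$ by declaring $\overline{x}\ge 0$ iff $x^+ + d \in D^+$... more concretely, iff there exists $d\in D$ with $x + d \ge 0$, equivalently $x^-\in D$; then set $\overline{x}\le\overline{y}$ iff $\overline{y-x}\ge 0$. One must verify this is well-defined and a genuine partial order on $E/D$ (antisymmetry is where~(3) bites: if $x^-\in D$ and $x^+\in D$ then $|x| = x^+ + x^-\in D$, so $x\in D$), that it makes $E/D$ an ordered vector space, and — the crux — that $E/D$ is a lattice with $\overline{x}\vee\overline{y} = \overline{x\vee y}$. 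For the lattice claim I would show $\overline{x\vee y}$ is an upper bound of $\overline{x},\overline{y}$ (clear) and that it is the least one: if $\overline{z}\ge\overline{x}$ and $\overline{z}\ge\overline{y}$, then $(x-z)^+, (y-z)^+\in D$, and using point~8 of the second lemma together with Lemma~\ref{lem:basics}(3) one gets $(x\vee y - z)^+ = \bigl((x-z)\vee(y-z)\bigr)^+ \le (x-z)^+ + (y-z)^+\in D^+$, so by~(3) and the subspace property $(x\vee y - z)^+\in D$, i.e.\ $\overline{z}\ge\overline{x\vee y}$. Infima follow dually, or from Lemma~\ref{lem:basics}(2). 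Once $E/D$ is a Riesz space and the quotient map is by construction a lattice-preserving linear surjection, its kernel is $\{x : \overline{x} = 0\}$; since $\overline{x}=0$ means $\overline{x}\ge0$ and $\overline{-x}\ge0$, i.e.\ $x^-\in D$ and $x^+\in D$, this kernel is exactly $D$ by the antisymmetry argument above, completing the cycle.
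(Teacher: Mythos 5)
Your proposal is correct and follows essentially the same route as the paper: the cycle $1\Rightarrow2\Rightarrow3\Rightarrow1$, with the substantive step being the construction of the quotient $E/_D$ ordered by $\overline{x}\geq 0$ iff $x+d\geq 0$ for some $d\in D$ (the paper phrases this as taking $q(E^+)$ as positive cone, and notes the same equivalence), antisymmetry via condition~3, and the verification that $q(x\vee y)$ (resp.\ $q(x\wedge y)$ in the paper) is the least upper bound (resp.\ greatest lower bound) by essentially the same estimate. The remaining differences are cosmetic, and the steps you flag as ``to be verified'' (well-definedness, the ordered-vector-space axioms) are exactly the routine ones the paper also dispatches quickly.
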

\begin{proof}
    1 $\Rightarrow$ 2.\quad  Assume~$D$ is a Riesz ideal,
        say it is the kernel of a Riesz homomorphism~$f$.
    Clearly~$D$ is a linear subspace.
        Assume~$|x| \leq |a|$ for some~$x\in E$ and~$a \in D$.
        Note Riesz homomorphisms preserve absolute
            values as~$|x|\equiv x\vee(-x)$ and
            so~$|f(x)| = f(|x|) \leq f(|a|) = |f(a)| = 0$
        hence~$f(x) = 0$. Thus~$x \in D$, as desired.

    2 $\Rightarrow$ 3.\quad Assume~$D$ is a linear subspace
        for which~$x\in D$ whenever~$|x| \leq |a|$ for some~$a \in D$.
    We only need to show~$D$ is a Riesz subspace.
    Assume~$a \in D$.  We have to show~$a^+ \in D$.
        Recall~$a^+ \leq a^+ +a^- = |a|$
            and so~$a^+ \in D$, as desired.

    3 $\Rightarrow$ 1.\quad
        Consider the linear quotient~$q\colon E \to E/_D$.
        We will turn~$E/_D$ into a Riesz space.
        We want to use~$q(E^+)$ as positive cone for a partial order on~$E/_D$;
        that is we define~$q(x) \leq q(y) \iff q(y)-q(x) \in q(E^+)$.
        This turns~$E/_D$ into a ordered vector space
            if~$q(E^+)$ is closed under addition, scalar multiplication
                by positive reals and if we
                    have~$q(a),-q(a) \in q(E^+) \Rightarrow q(a)=q(0)$.
        Clearly~$q(E^+)$ is closed under addition and scalar multiplication
            by positive reals.
         Note~$q(a) \in q(E^+)$ iff there is~$d \in D$ with~$d \leq a$.
        To show the last condition, assume~$q(a), -q(a) \in q(E^+)$.
        Then~$d \leq a$ and~$d' \leq -a$ for some~$d,d' \in D$.
        Hence~$0 \leq a -d \leq -d' -d$.
        By assumption on~$D$ we find~$a-d \in D$.
        Thus~$d \in D$ and~$q(a)=q(0)$.
        Indeed~$q(E^+)$ is a positive cone and we can
            order~$E/_D$ using it.
            Automatically~$q$ is order preserving.

    To show~$E/_D$ is a Riesz space
        assume~$q(x),q(y) \in E/_D$.
            Clearly~$q(x \wedge y) \leq q(x)$
                and~$q(x \wedge y) \leq q(y)$.
    Assume there is a~$q(a)\in E/_D$
        with~$q(a) \leq q(x)$ and~$q(a) \leq q(y)$.
    Then~$x - c \geq d$ 
        and~$y - c \geq d'$ for some~$d,d' \in D$.
    Consequently~$x\wedge y - c = (x - c) \wedge (y-c) \geq d \wedge d'$.
    As~$d\wedge d' \in D$ we see~$q(c)\leq q(x \wedge y)$.
    We have shown~$E/_D$ has pairwise infima.
    As~$x \mapsto -x$ is an order anti-isomorpism
        we see~$E/_D$ has pairwise suprema as well.
    We also saw~$q$ is a Riesz homomorphism
        and by construction~$D$ is its kernel.
\end{proof}
\begin{lem}\label{lem:perpideal}
    For any~$a \in E$ the set~$a^\perp := \{ x;\ x\in E;\  x \perp a\}$
        is a Riesz ideal.
\end{lem}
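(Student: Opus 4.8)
The plan is to invoke Lemma~\ref{lem:rieszideal}; concretely, I will verify condition~2, i.e.\ that~$a^\perp$ is a linear subspace with the property that~$x \in a^\perp$ whenever~$|x| \leq |b|$ for some~$b \in a^\perp$. That last, \emph{domination}, property is immediate: membership~$x \in a^\perp$ depends only on~$|x|$, and since~$|x|, |a| \geq 0$ we have~$|x|\wedge|a| \geq 0$, so from~$|x| \leq |b|$ and monotonicity of~$\wedge$ we get~$0 \leq |x|\wedge|a| \leq |b|\wedge|a| = 0$. Also~$0 \in a^\perp$ since~$|0|\wedge|a| = 0\wedge|a| = 0$.

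For the linear-subspace part two things need checking. Closure under scalar multiplication: using~$|\lambda x| = |\lambda|\,|x|$ it suffices to see that~$u \wedge |a| = 0$ implies~$(\lambda u)\wedge|a| = 0$ for~$u \geq 0$ and~$\lambda \geq 0$, which follows from Lemma~\ref{lem:basics} by picking a natural number~$n \geq \lambda$ and noting~$0 \leq (\lambda u)\wedge|a| \leq (nu)\wedge(n|a|) = n(u\wedge|a|) = 0$. Closure under addition is where I expect the only real work. Given~$x,y \in a^\perp$ I must show~$|x+y|\wedge|a| = 0$; since~$|x+y| \leq |x|+|y|$ it is enough to prove~$(|x|+|y|)\wedge|a| = 0$. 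For this I will use the Riesz Decomposition Lemma: setting~$t := (|x|+|y|)\wedge|a|$ we have~$0 \leq t \leq |x|+|y|$, so~$t = x'+y'$ with~$0 \leq x' \leq |x|$ and~$0 \leq y' \leq |y|$; but also~$x' \leq t \leq |a|$ and~$y' \leq t \leq |a|$, whence~$x' \leq |x|\wedge|a| = 0$ and~$y' \leq |y|\wedge|a| = 0$, giving~$t = 0$.

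With linearity and the domination property established, Lemma~\ref{lem:rieszideal} (2~$\Rightarrow$~1) immediately gives that~$a^\perp$ is a Riesz ideal. The main obstacle is thus the additivity step, and the one thing to be careful about is that every element fed to the Riesz Decomposition Lemma and to Lemma~\ref{lem:basics} is genuinely positive — which holds automatically, since everything in sight is an absolute value or an infimum of positive elements.
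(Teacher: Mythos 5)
Your proof is correct, but it diverges from the paper's in two ways worth noting. First, you verify condition~2 of Lemma~\ref{lem:rieszideal} (linear subspace plus the domination property $|x|\leq|b|\Rightarrow x\in a^\perp$), whereas the paper verifies condition~3 and therefore has to do an extra piece of work: showing $a^\perp$ is a Riesz subspace by checking $x^+\in a^\perp$ via the distributivity identity $(x^+\vee x^-)\wedge|a| = (x^+\wedge|a|)\vee(x^-\wedge|a|)$. Your route sidesteps that entirely, since the domination property is immediate from monotonicity of $\wedge$. Second, for closure under addition the paper again leans on distributivity --- $(|x|\vee|y|)\wedge|a| = (|x|\wedge|a|)\vee(|y|\wedge|a|)=0$, then bounds $|x|+|y|\leq 2(|x|\vee|y|)$ and reuses the scalar step --- while you instead apply the Riesz Decomposition Lemma to $t:=(|x|+|y|)\wedge|a|$, splitting $t=x'+y'$ with $0\leq x'\leq|x|$, $0\leq y'\leq|y|$, and observing $x',y'\leq t\leq|a|$ forces $x'=y'=0$. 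Both are standard and correct; the paper's version exercises the lattice identity from Lemma~\ref{lem:basics}.5, yours exercises the Decomposition Lemma and is arguably the more conceptual explanation of why orthogonal complements absorb sums. Your handling of scalars (reducing to $\lambda,u\geq 0$ and majorizing by $n(u\wedge|a|)$ for a natural $n\geq\max(\lambda,1)$) matches the paper's in substance.
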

\begin{proof}
    We will first prove that~$a^\perp$ is a linear subspace.
    Let~$x\in a^\perp$
        and~$\lambda \in \R$.
    Assume~$|\lambda| \leq 1 $.
    Then~$0 \leq |\lambda x| \wedge |a| = |\lambda||x| \wedge |a| \leq |x| \wedge |a| = 0$
    and so~$\lambda x \perp  a$.
    In the other case~$|\lambda| \geq n$ for some~$n \in \N$ with~$n \neq 0$
        and so~$|\frac{\lambda}{n}| \leq 1$.
        By the previous~$|\frac{\lambda}{n}| \wedge |a| = 0$.
        Hence~$0= |\lambda| \wedge (n|a|) \geq |\lambda|\wedge |a|$, as desired.

    Let~$x,y \in a^\perp$.
    By distributivity~$(|x| \vee |y|) \wedge |a|
                = (|x|\wedge |a|) \vee (|y| \wedge |a|) = 0$
                so~$|x| \vee |y| \perp a$.
                By the previous~$2 (|x| \vee |y|) \perp a$.
    Thus
    \begin{equation*}
    |x +y|\wedge |a| \leq (|x|+|y|)\wedge |a|
    \leq (2(|x| \vee |y|)) \wedge |a| = 0.
    \end{equation*}
    Thus~$a^\perp$ is a linear subspace.
    To show~$a^\perp$ is a Riesz ideal,
        we will demonstrate condition 3 from Lemma~\ref{lem:rieszideal}.
    Assume~$0 \leq x \leq y \in a^\perp$.
    Then~$0 \leq x \wedge |a| \leq y \wedge |a| = 0$
        and so~$x \in a^\perp$ --- it remains to be
        shown that~$a^\perp$ is a Riesz subspace.
    It is sufficient to show~$x^+ \in a^\perp$ whenever~$x \in a^\perp$.
        Indeed from
        \begin{equation*}
        0 = |x|\wedge |a| = (x^+ \vee x^-) \wedge |a|
        = ( x^+ \wedge |a| ) \vee (x^- \wedge |a|)
        \end{equation*}
        it follows~$x^+ \perp a$ as desired.
\end{proof}

\section{Units and their norms}

\begin{dfn}
\begin{enumerate}
    \item
An element~$u \in E^+$ is called an (order) \keyword{unit}
        if for every~$x\in E$ there
exists a~$n\in \N$ such that~$|x| < n\cdot u$. A Riesz space is called
\keyword{unitary} if it has a unit.
\item
An element~$\varepsilon \in E$ is called \keyword{infinitesimal} if there is
a~$b \in E$ such that~$n\cdot \varepsilon \leq b$ for all~$n \in \Z$.
A Riesz space is called \keyword{Archimedean} if its only infinitesimal is~$0$.
\end{enumerate}
\end{dfn}

\begin{dfn}
Given an Riesz space~$E$ with unit~$u \in E$
    write
    \begin{equation*}
     \| x \|_u := \inf \{\lambda;\ \lambda \in \R^+;\  |x| \leq \lambda u\} .
    \end{equation*}
\end{dfn}
\begin{lem}
    Let~$E$ be a Riesz space with units~$e,u$.
    \begin{enumerate}
        \item $\|\  \|_u$ is a seminorm.
        \item If $\| x \|_u = 0$, then~$x$ is infinitesimal.
        \item If~$E$ is Archimedean,
                then $\|\  \|_u$ is a norm
                and~$|x| \leq \|x\|_u u$.
        \item $\|\ \|_e$ and~$\|\ \|_u$ are equivalent.
    \end{enumerate}
\end{lem}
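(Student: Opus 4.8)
The plan is to treat the four claims roughly in order, using points~1 and~2 to prepare the ground for point~3, and disposing of point~4 by a short comparison of units. I expect points~1, 2 and~4 to be routine; the only genuinely non-trivial step is the inequality in point~3.

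For point~1 I would first note that the defining set $\{\lambda\in\R^+ : |x|\le\lambda u\}$ is nonempty, since $u$ is a unit gives an $n\in\N$ with $|x|<nu\le nu$; hence $\|x\|_u$ is a well-defined element of $\R^+$ and $\|0\|_u=0$. Homogeneity $\|\lambda x\|_u=|\lambda|\,\|x\|_u$ comes from $|\lambda x|=|\lambda|\,|x|$: for $\lambda\ne 0$ one has $|x|\le\mu u$ iff $|\lambda x|\le|\lambda|\mu\,u$, so the admissible sets correspond under scaling by $|\lambda|$; the case $\lambda=0$ is trivial as $u\ge 0$. Subadditivity uses the triangle inequality $|x+y|\le|x|+|y|$: from $|x|\le\lambda u$ and $|y|\le\mu u$ we get $|x+y|\le(\lambda+\mu)u$, and taking the infimum over admissible $\lambda,\mu$ yields $\|x+y\|_u\le\|x\|_u+\|y\|_u$.

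For point~2, assume $\|x\|_u=0$. Then for each positive $n$ there is an admissible $\lambda\le\frac1n$, so $|x|\le\lambda u\le\frac1n u$ and hence $n|x|\le u$; since $nx\le|nx|=|n|\,|x|\le u$ for every $n\in\Z$ (the cases $n\le 0$ being immediate from $u\ge 0$), $x$ is infinitesimal with witness $b=u$. Point~3's norm claim then follows from point~1 together with the Archimedean hypothesis: if $\|x\|_u=0$ then $x$ is infinitesimal, hence $x=0$. The main obstacle is the remaining inequality $|x|\le\|x\|_u u$ of point~3 — effectively showing the infimum is attained — and this is exactly where the Archimedean property does real work. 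I would put $r=\|x\|_u$, and for each $\varepsilon>0$ pick an admissible $\lambda\le r+\varepsilon$, so $|x|\le\lambda u\le(r+\varepsilon)u$. By monotonicity of $\vee$ (and $\varepsilon u\ge 0$) the element $z:=(|x|-ru)\vee 0$ then satisfies $0\le z\le\varepsilon u$; taking $\varepsilon=\frac1n$ gives $nz\le u$ for all positive $n$, while $nz\le 0\le u$ for $n\le 0$ since $z\ge 0$. Hence $z$ is infinitesimal, so $z=0$, i.e.\ $|x|\le ru=\|x\|_u u$.

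Finally, for point~4: since $u,e\in E^+$ are units, applying the unit property of $e$ to the element $u$ and of $u$ to $e$ yields nonzero $n,m\in\N$ with $u\le ne$ and $e\le mu$. If $|x|\le\lambda u$ then $|x|\le\lambda n e$, so $\|x\|_e\le n\|x\|_u$; symmetrically $\|x\|_u\le m\|x\|_e$. Thus $\frac1n\|x\|_e\le\|x\|_u\le m\|x\|_e$ for every $x$, which is the asserted equivalence of $\|\ \|_e$ and $\|\ \|_u$.
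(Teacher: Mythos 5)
Your proposal is correct and follows essentially the same route as the paper: each point is handled in the same order, with the same key mechanism (manufacture an infinitesimal and invoke the Archimedean property) doing the work in points~2 and~3. The only differences are cosmetic streamlinings --- you get homogeneity by observing that the admissible sets for $x$ and $\lambda x$ correspond under scaling rather than via the paper's two-sided $\varepsilon$-estimate, and in point~3 you show directly that $(|x|-\|x\|_u u)\vee 0$ is infinitesimal instead of first proving $\inf\{\tfrac1n u :\ n\in\N_{>0}\}=0$.
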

\begin{proof}
    First we'll show~$\|\ \|_u$ is a seminorm.
    Let~$\varepsilon > 0$ be given.
    For any~$x \in E$ we have~$|x| \leq (\| x \|_u + \varepsilon) u$.
    So
    $ |a+b| \leq |a| + |b|
        \leq (\|a\|_u + \|b\|_u + 2\varepsilon) u$.
    Hence~$\|a+b\|_u \leq \|a\|_u + \|b\|_u + 2\varepsilon$.
    Thus, as~$\varepsilon>0$ was arbitrary, the triangle inequality
        holds for~$\|\ \|_u$.

    Now we'll show the absolute homogeneity.
    Let~$\mu > 0$ be given.
    With~$|\mu a|=|\mu||a|$
    and similar reasoning as before
        we see~$\|\mu a\|_u \leq |\mu| \|a\|_u$.
    To prove the other inequality
        first note that for any~$\|a\|_u \geq  \varepsilon > 0$
        we have~$|a| \nleq (\|a\|_u - \varepsilon)u$.
    Thus~$|\mu a | = |\mu| |a| \nleq (|\mu| \|a\|_u - |\mu|\varepsilon )u$.
    As we can make~$|\mu|\varepsilon$ arbitrarily small
        we see~$\|\mu a\|_u \geq |\mu| \|a\|_u$.
    This gives absolute homogeneity except for the case~$\mu=0$,
        which is trivially true.
    Finally, by definition~$\|\ \|_u$ is positive,
        so it's a seminorm.

    Assume~$\|x\|_u = 0$. Then~$|x| \leq \frac{1}{n} u$ for any~$n \in \N$
        with~$n >0$.
        Hence~$n |x| \leq u$
    Now, for any~$m \in \mathbb{Z}$
        we have~$m x \leq |mx| = |m| |x| \leq u$
        and so~$x$ is infinitesimal.

    If~$E$ is Archimedean then directly by the previous
        we see~$\|\ \|_u$ is a norm.
        Now we show~$|a| \leq \|a\|_u u$.
        Suppose~$n \in \N$ with~$n \neq 0$.
        Then~$|a| \leq (\|a\|_u + \frac{1}{n})u = \|a\|_u u + \frac{1}{n} u$.
        By taking the infimum over~$n \in \N_{>0}$,
            we see it suffices to
            show~$\inf \{ \frac{1}{n} u;\  n \in \N_{ > 0}\} = 0$.
        Clearly~$0$ is a lower-bound.
        Assume~$b \in E$ is another lower-bound,
            i.e.~$b \leq \frac{1}{n} u$ for all~$n \in \N_{>0}$.
        Then~$b^+ \leq (\frac{1}{n}u)^+ = \frac{1}{n}u$
            so~$n b^+ \leq \frac{1}{n}u$
                and~$-nb^+ \leq 0 \leq \frac{1}{n}u$
                for all~$n \in \N$.
        Thus~$b^+$ is infinitesimal: so~$b^+ = 0$
            and~$b \leq 0$, as desired.

        Finally we show point 4.
        As for any~$\varepsilon > 0$ we have~$|x| \leq (\|x\|_u + \varepsilon) u$
        we get~$\|x\|_e \leq (\|x\|_u + \varepsilon) \|u\|_e$
            and so $\|x\|_e \leq \|x\|_u  \|u\|_e$.
        Similarly$\|x\|_u \leq \|x\|_e \|e\|_u$
        and so~$\frac{1}{\|e\|_u}\|x\|_u \leq \|x\|_e \leq \|u\|_e \|x\|_u$,
        which shows~$\|\ \|_u$ and~$\|\ \|_e$ are equivalent norms.
\end{proof}

\begin{lem} \label{riesz-hom-are-cont}
Any non-zero Riesz homomorphism~$f\colon E \to E'$
between unital Archimedean Riesz spaces
is continuous with respect to the norms
induced by any unit.
\end{lem}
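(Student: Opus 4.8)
The plan is to show that $f$ is bounded, which for a linear map between normed spaces is equivalent to continuity. By point~4 of the previous lemma, the norms induced by different units on a fixed space are equivalent, so it suffices to work with one unit on each side: I would fix units $u \in E$ and $u' \in E'$ and aim for an inequality of the form $\|f(x)\|_{u'} \leq C\,\|x\|_u$.

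First I would record two elementary facts about the Riesz homomorphism $f$. It is monotone: for $x \in E^+$ we have $x = x \vee 0$, hence $f(x) = f(x) \vee f(0) = f(x) \vee 0 \geq 0$. And it preserves absolute values, since $|f(x)| = f(x) \vee (-f(x)) = f(x \vee (-x)) = f(|x|)$ — exactly as already used in the proof of Lemma~\ref{lem:rieszideal}. The estimate is then immediate. Given $x \in E$ and $\varepsilon > 0$, the definition of $\|x\|_u$ as an infimum gives $|x| \leq (\|x\|_u + \varepsilon) u$; applying $f$ yields $|f(x)| = f(|x|) \leq (\|x\|_u + \varepsilon)\, f(u)$. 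Since $f(u) \geq 0$ and $E'$ is Archimedean, point~3 of the previous lemma gives $f(u) = |f(u)| \leq \|f(u)\|_{u'}\, u'$, so $|f(x)| \leq (\|x\|_u + \varepsilon)\,\|f(u)\|_{u'}\, u'$ and therefore $\|f(x)\|_{u'} \leq (\|x\|_u + \varepsilon)\,\|f(u)\|_{u'}$. Letting $\varepsilon \to 0$ gives $\|f(x)\|_{u'} \leq \|f(u)\|_{u'}\,\|x\|_u$ for all $x$, so $f$ is bounded (with operator norm at most $\|f(u)\|_{u'}$) and hence continuous.

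I do not expect a genuine obstacle here; the one point worth flagging is that one might expect to need $f(u)$ to be a \emph{unit} of $E'$ in order to reuse the norm machinery on the target, but $f$ need not be surjective and $f(u)$ need not be a unit. The computation above sidesteps this: it uses only that $f(u)$ is a nonnegative element dominated by a multiple of $u'$, which is guaranteed by Archimedeanness of $E'$. In fact the boundedness estimate does not even invoke $f \neq 0$; the non-zero hypothesis would only matter if one additionally wanted $\|f(u)\|_{u'} \neq 0$, which is not needed for continuity.
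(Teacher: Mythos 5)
Your proof is correct and follows essentially the same route as the paper's: both rest on the estimate $|f(x)| = f(|x|) \leq \|x\|_u\, f(u) = \|x\|_u\, |f(u)| \leq \|x\|_u\, \|f(u)\|_{u'}\, u'$, with the paper phrasing it as an $\varepsilon$--$\delta$ argument and you as a boundedness bound. Your side remarks — that $f(u)$ need not be a unit of $E'$ and that the non-zero hypothesis is not actually needed for continuity (the paper only uses it to divide by $\|f(u)\|_{u'}$ when choosing $\delta$) — are both accurate.
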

\begin{proof}
    Pick any unit~$u \in E$ and~$u' \in E'$.
    As all unit norms are equivalent
        it is sufficient to show~$f$ is continuous
        with respect to~$\|\ \|_u$ and~$\|\ \|_{u'}$.
    If~$\| f(u) \|_{u'} =0$, then~$f(u)=0$
        and so~$f=0$ as~$-\| x\|_u u \leq x \leq \|x\|_uu$
        for any~$x \in E$.  Thus~$\|f(u)\|_{u'} > 0$.
    Let~$\varepsilon > 0$ be given.
        If~$\|x - y\|_u \leq \delta$
            for some~$\delta>0$
        then~$|x-y| \leq \delta u$
            and so~$|f(x) - f(y)| = f(|x-y|) \leq \delta f(u)
                    = \delta |f(u)| \leq \delta \|f(u)\|_{u'} u'$.
    Thus it is sufficient to
        choose~$\delta \leq \frac{\varepsilon}{\|f(u)\|_{u'}}$.
\end{proof}
\begin{dfn}
An Archimedean unitary Riesz space is called \keyword{uniformly complete}
if~$\|\ \|_u$ is complete for some unit~$u \in E$.
\end{dfn}
As order-unit norms are equivalent we know that
    if~$E$ is complete with respect to one unit, then it is complete
    with respect to all units.
\begin{dfn}
We write~$\mathsf{CAURiesz}$ for the category of
uniformly complete
Archimedean Riesz spaces with distinguished unit as objects
and Riesz homomorphisms that preserve the selected unit as arrows between.
\end{dfn}

\section{Maximal Riesz ideals}
The real numbers form a totally ordered
Archimedean Riesz space and is, in fact, the only one.
This Proposition will play the same r\^ole
as the Banach--Mazur Theorem in the development
of Gel'fand Duality.
\begin{prop}
If~$E$ is a Riesz space with more than one element, then
TFAE.
\begin{enumerate}
\item
$E \cong \R$
\item
The only ideals of~$E$ are~$\{0\}$ and~$E$.
\item
$E$ is totally ordered and Archimedean.
\end{enumerate}
\end{prop}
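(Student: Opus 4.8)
The plan is to prove the cycle $1\Rightarrow 2\Rightarrow 3\Rightarrow 1$. The implication $1\Rightarrow 2$ is immediate: by Lemma~\ref{lem:rieszideal} every Riesz ideal is in particular a linear subspace, and $E\cong\R$ has no linear subspaces other than $\{0\}$ and $E$.

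For $2\Rightarrow 3$ I would contradict~2 from each possible failure. If $E$ is not totally ordered, pick incomparable $a,b$ and set $p:=(a-b)^+$, $q:=(a-b)^-$. These are positive, non-zero (because $a\not\leq b$ and $b\not\leq a$) and orthogonal (since $x^+\perp x^-$ always holds), so by Lemma~\ref{lem:perpideal} the Riesz ideal $p^\perp$ contains $q$, hence is non-zero, but omits $p$, hence is proper --- contradicting~2. If $E$ is not Archimedean, I would first check that the set $N$ of infinitesimals is a solid linear subspace: closure under sums and scalar multiples follows from $|x+y|\leq|x|+|y|$ and $|\lambda x|=|\lambda||x|$ once one observes that $\varepsilon$ is infinitesimal exactly when $\{\,n|\varepsilon|:n\in\N\,\}$ has an upper bound, and solidity is then clear. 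By Lemma~\ref{lem:rieszideal} $N$ is a Riesz ideal, so $\{0\}\subsetneq N$ forces $N=E$; picking $a$ with $\tilde a:=|a|>0$ and an upper bound $b$ of $\{\,n\tilde a:n\in\N\,\}$ (necessarily $b\geq\tilde a>0$), the principal ideal $\{\,x\in E:|x|\leq\lambda\tilde a\text{ for some }\lambda\geq 0\,\}$ is non-zero (it contains $\tilde a$) but proper (it omits $b$, since $n\tilde a\leq b\leq\lambda\tilde a$ with $n>\lambda$ would give $\tilde a\leq 0$), again contradicting~2.

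For $3\Rightarrow 1$, assume $E$ is totally ordered, Archimedean, and has more than one element, and fix some $u>0$. Total order together with the Archimedean law makes $u$ a unit: were $|x|\leq nu$ to fail for every $n$, total order would give $nu<|x|$, hence $nu\leq|x|$, for all $n\in\Z$, making $u$ infinitesimal. I would then define $\phi\colon E\to\R$ by $\phi(x):=\inf\{\lambda\in\R:x\leq\lambda u\}$; the defining set is non-empty (since $u$ is a unit) and bounded below (if $-mu\leq x$ and $x\leq\lambda u$ then $\lambda\geq -m$), so $\phi$ is well defined. The crucial point --- and the one place the Archimedean hypothesis really bites --- is that this infimum is attained, i.e.\ $x\leq\phi(x)u$, which holds because $\bigl(x-\phi(x)u\bigr)^+\leq\varepsilon u$ for every $\varepsilon>0$ and $E$ is Archimedean. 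It follows that $\mu u\leq x$ for every $\mu<\phi(x)$, so $\phi$ also coincides with $\psi(x):=\sup\{\mu\in\R:\mu u\leq x\}$; as $\phi$ is subadditive and $\psi$ superadditive, $\phi$ is additive, and since $\phi$ is positively homogeneous by inspection with $\phi(0)=0$, it is $\R$-linear. It is moreover order preserving, surjective (as it is linear with $\phi(u)=1$), and injective (if $\phi(x)=0$ then $x\leq\phi(x)u=0$ and, as $\phi(-x)=0$ too, $-x\leq 0$, whence $x=0$); being then an order isomorphism onto the totally ordered $\R$, $\phi$ preserves finite suprema and infima and so is an isomorphism of Riesz spaces.

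I expect the main obstacle to be the $3\Rightarrow 1$ step, specifically verifying that $\phi$ is well defined and $\R$-linear --- above all the attained-infimum claim, which is precisely where Archimedeanness is used --- with a lesser amount of care needed in $2\Rightarrow 3$ to confirm that the infinitesimals genuinely form a Riesz ideal.
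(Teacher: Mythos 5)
Your proof is correct, and its skeleton ($1\Rightarrow2\Rightarrow3\Rightarrow1$, with the orthogonality ideal $\bigl((a-b)^+\bigr)^\perp$ of Lemma~\ref{lem:perpideal} forcing total order) coincides with the paper's. You diverge in two places. For the Archimedean half of $2\Rightarrow3$ the paper uses the single ideal $\{x;\ n|x|\leq b\text{ for all }n\in\N\}$ attached to one bound $b$ of the putative infinitesimal and squeezes $b=0$ out of the case $J=E$; you instead pass through the ideal of all infinitesimals and then a principal ideal. Both work; yours is slightly longer, and you should record explicitly that the principal ideal $\{x;\ |x|\leq\lambda|a|\text{ for some }\lambda\geq0\}$ satisfies condition~2 of Lemma~\ref{lem:rieszideal} --- you only name it, though the verification is one line. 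For $3\Rightarrow1$ the paper fixes $a>0$ and shows each $b\in E^+$ is a scalar multiple of $a$ via $r:=\sup\{r;\ rb\leq a\}$, concluding $E$ is one-dimensional, whereas you construct the functional $\phi(x)=\inf\{\lambda;\ x\leq\lambda u\}$ directly and check it is a Riesz isomorphism via the sub-/superadditive sandwich. The two arguments invoke total order and Archimedeanness at exactly the same spots (comparability to control the wrong side of the sup/inf, Archimedeanness to see it is attained), so this is more a difference of bookkeeping than of substance; your version has the advantage of exhibiting the isomorphism explicitly and making the linearity argument airtight, at the cost of a longer verification.
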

\begin{proof}
1 $\Rightarrow$ 2.\quad Direct.

2 $\Rightarrow$ 3.\quad First we will prove~$E$ is totally ordered.
Assume $x,y\in E$. Then $J = \{v\in E; \ v \perp (x - y)^+ \}$ is
a Riesz ideal by Lemma~\ref{lem:perpideal}.
If~$J=\{0\}$ then since~$(x-y)^- \perp (x-y)^+$, we know
$(x-y)^-=0$ and thus~$0\leq x-y$. Hence~$x\geq y$.
In the other case: if~$J=E$, then $(x-y)^+ \perp (x-y)^+$ and thus~$(x-y)^+=0$.
Hence~$x-y = -(x-y)^- \leq 0$ and thus~$x \leq y$.

Now we will prove~$E$ is Archimedean.
Assume~$\varepsilon$ is an infinitesimal of~$E$.
We will show~$\varepsilon = 0$.
Let~$b \in E$ such that~$n |\varepsilon| \leq b$ for all~$n \in \N$.
Note~$J:=\{ x \in E; \ n |x| \leq b \text{ for all } n \in \N \}$
    is a Riesz ideal.
If~$J = \{0\}$ we are done.
In the other case~$J=E$.
Then~$b \in J$ and~$n|b| \leq b \leq n|b|$ for all~$n \neq 0$.
Thus~$2|b| = b = |b|$ so~$|\varepsilon| \leq b = 0$ and we are done.

3 $\Rightarrow$ 1.\quad
Pick any~$a > 0$.
The map~$\lambda \mapsto \lambda a$
    is a linear order isomorphism of~$\R$
    onto a $1$-dimensional subspace of~$E$.
We are done if this subspace is~$E$ itself.
Let~$b \in E^+$.
Define~$r := \sup\{ r;\ r \in \R^+;\ rb \leq a\}$.
As~$E$ is Archimedean,
    there is a~$n$ with~$nb \nleq a$.
    Thus~$nb \geq a$ and so~$r < \infty$.
For any~$\varepsilon > 0$
    we must have~$(r+ \varepsilon)b > a$ as~$E$ is totally ordered.
Taking infimum over~$\varepsilon> 0$ we see~$a = rb$
    thus indeed~$E$ is 1-dimensional.
\end{proof}

An important corollary is the following:
\begin{prop} \label{r-embedding-prop}
Given an Archimedean Riesz
space~$E$ with unit~$e$.
For every~$a \in E$, $a \neq 0$, there is a
Riesz homomorphism~$\varphi\colon E \to \R$ such that~$\varphi(e)=1$
and~$\varphi(a) \neq 0$.
\end{prop}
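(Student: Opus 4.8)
The plan is to build $\varphi$ as a quotient map $q\colon E\to E/D$ for a suitable Riesz ideal $D$ with $a\notin D$, followed by an isomorphism $E/D\cong\R$ coming from the preceding Proposition. First I would reduce to $a>0$: any Riesz homomorphism satisfies $\varphi(|a|)=|\varphi(a)|$, so $\varphi(a)\neq 0$ exactly when $\varphi(|a|)\neq 0$, and $a$ may be replaced by $|a|$. Then I would order by inclusion the Riesz ideals $D\subseteq E$ with $a\notin D$; this poset is non-empty (it contains $\{0\}$), and the union of a chain of such ideals is again one --- by Lemma~\ref{lem:rieszideal} a Riesz ideal is just a linear subspace closed under passage to dominated elements, and both clauses survive directed unions while $a$ stays outside --- so Zorn's Lemma produces a maximal such $D$. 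Since $e$ is a unit and $a\notin D$, also $e\notin D$, so $D$ is proper; hence $u:=q(e)$ is a unit of $F:=E/D$ and $b:=q(a)$ is positive and non-zero.

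The heart of the matter is to show $E/D\cong\R$, which by the preceding Proposition amounts to showing that $\{0\}$ and $F$ are the only Riesz ideals of $F$, i.e.\ that $D$ is a maximal proper ideal. Maximality of $D$ gives at once that every Riesz ideal strictly containing $D$ contains $a$, so every non-zero Riesz ideal of $F$ contains $b$. From this, with Lemma~\ref{lem:perpideal}, I would deduce that $F$ is totally ordered: if $x,y\in F$ were incomparable, then $(x-y)^+$ and $(x-y)^-$ are both non-zero, and $b$ --- belonging both to the non-zero ideal $\bigl((x-y)^+\bigr)^\perp$ and to the ideal generated by $(x-y)^+$ --- would satisfy $b\perp(x-y)^+$ and $b\leq n(x-y)^+$ for some $n$, whence $b=b\wedge n(x-y)^+=0$, a contradiction. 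Now in a totally ordered Riesz space the ideals are linearly ordered, so were $D$ not maximal proper, say $D\subsetneq I\subsetneq E$, then $I/D$ would be a proper non-zero ideal of $F$, and hence so would the least non-zero ideal of $F$, namely the one generated by $b$; then the unit $u$ is dominated by no multiple of $b$, which in a totally ordered space means $nb\leq u$ for all $n\in\N$ --- that is, $b$ is infinitesimal in $F$.

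The Archimedean hypothesis on $E$ now enters, and closing this case is the step I expect to be the main obstacle. From $nb\leq u$ one gets $q\bigl((na-e)^+\bigr)=(nb-u)^+=0$, so $(a-\tfrac1n e)^+\in D$ for all $n\geq 1$; and since $E$ is Archimedean one has $a=\sup_n(a-\tfrac1n e)^+$ in $E$ (if $v\geq(a-\tfrac1n e)^+$ for all $n$, then $(a-v)^+\leq\tfrac1n e$ for all $n$, so $(a-v)^+$ is infinitesimal, hence $0$, so $a\leq v$). The delicate point is to upgrade ``$a$ is a supremum of a sequence from $D$'' to ``$a\in D$''; the clean route is to ensure that the ideal one works with is uniformly closed, using that a quotient of an Archimedean Riesz space by a uniformly closed ideal is again Archimedean, so that $F$ comes out totally ordered \emph{and} Archimedean and is therefore $\cong\R$ by the preceding Proposition. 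Finally, rescaling the isomorphism $\psi\colon E/D\to\R$ so that $\psi(u)=1$ and putting $\varphi:=\psi\circ q$ gives $\varphi(e)=1$ and $\varphi(a)=\psi(b)\neq 0$, as desired.
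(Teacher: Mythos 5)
Your reduction to $a>0$, the Zorn argument over ideals avoiding $a$, and the deduction that $F=E/D$ is totally ordered with every non-zero ideal containing $b$ are all sound. The gap is exactly where you flag it, and it is not a technicality that can be patched in place: the case ``$b$ is infinitesimal in $F$'' genuinely occurs for a $D$ that your Zorn argument is allowed to produce, and then no real-valued $\varphi$ factoring through $q$ can satisfy $\varphi(a)\neq 0$, since any Riesz homomorphism into the Archimedean space $\R$ kills infinitesimals. Concretely, take $E=C([0,1])$, $e=\mathbb{1}$, $a(t)=t$, and let $J$ be the set of all $f$ with $|f|\leq\lambda\,(a-\tfrac1n e)^+$ for some $\lambda>0$ and $n\in\N$; this is a Riesz ideal with $a\notin J$ (compare values at $t=\tfrac{1}{2n}$), so Zorn yields a maximal ideal $D\supseteq J$ avoiding $a$, yet $(na-e)^+\in D$ for all $n$, so $nq(a)\leq q(e)$ while $q(a)\neq 0$: the quotient is not Archimedean and this $D$ is useless. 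Your proposed repair --- rerun the argument over uniformly closed ideals --- does not integrate with the Zorn step: the union of a chain of uniformly closed ideals avoiding $a$ need not be uniformly closed, and its uniform closure may capture $a$ (indeed $a$ lies in the uniform closure of any ideal containing all $(a-\tfrac1n e)^+$, because $a-(a-\tfrac1n e)^+=a\wedge\tfrac1n e$ has $e$-norm at most $\tfrac1n$). So the approach needs restructuring, not just an added closedness hypothesis.

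The paper sidesteps the problem by choosing the ideal differently: since $E$ is Archimedean there is one fixed $n$ with $c:=(na-e)^+\neq 0$; it seeds Zorn with the polar $c^\perp$ (proper because $c\not\perp c$) and maximizes over \emph{all} proper ideals, so the quotient has only the two trivial ideals and is therefore totally ordered \emph{and} Archimedean by the preceding Proposition, hence $\cong\R$. The conclusion $\varphi(a)\geq\tfrac1n>0$ then comes not from keeping $a$ outside the ideal but from forcing $(na-e)^-=c^-\in c^\perp\subseteq J_0$ inside it, which gives $(n\varphi(a)-1)^-=0$. That is the missing idea: abandon the constraint $a\notin D$ and instead arrange $(na-e)^-\in D$ for a single well-chosen $n$.
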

\begin{proof}
It is sufficient to find such~$\varphi$ for~$|a|$
as~$0\neq \varphi(|a|)=|\varphi(a)|$ implies~$\varphi(a) \neq 0$.
Thus assume~$a > 0$.
    As~$E$ is Archimedean, there is an~$n \in \N$
        with~$na \nleq e$.
    That is~$na - e \nleq 0$. But~$na -e \leq (na-e)^+$,
            so~$(na-e)^+ \neq 0$.

Consider the proper Riesz ideal~$J := \{ x;\ x \perp (na-e)^+\}$.
Find using Zorn's Lemma a maximal proper Riesz ideal~$J_0 \supseteq J$.
Then~$E/_{J_0}$ contains precisely two ideals and must be isomorphic to~$\R$.
Define~$\varphi_0$ as the composition of this isomorphism with
the quotient-map~$E  \to E/_{J_0} \cong \R$.
Note~$\varphi_0(e) > 0$
    as otherwise~$\varphi_0 = 0$.
Define~$\varphi := \frac{\varphi_0}{\varphi_0(e)}$.
Clearly~$\varphi(e)=1$.

Note~$(na -e)^- \in J$ and so~$0 = \varphi((na-e)^-)
    = (n\varphi(a) -\varphi(e))^-$
    which implies~$n\varphi(a)
        - \varphi(e) = (n \varphi(a)- \varphi(a))^+ \geq 0$.
Thus~$n \varphi(a) \geq \varphi(e) > 0$
    and so~$\varphi(a) \neq 0$.
\end{proof}

\section{Function spaces}
\begin{exa}\label{exa:CXisCAU}
    Let~$X$ be a compact Hausdorff space. Then~$C(X)$ with the
obvious pointwise operations and order, is a Riesz space. Since~$X$ is
compact, all these functions are bounded,
$\mathbb{1}(x):=1$ is a unit and
$\|\ \|_{\mathbb{1}}$ is a supnorm for which~$C(X)$ is complete.
It is also obvious that the zero function is the only
infinitesimal. Thus~$C(X)$ is Archimedean.
Thus~$C(X)$ with unit~$\mathbb{1}$ is an object of~$\mathsf{CAURiesz}$.
\end{exa}
Let~$X$ be a compact Hausdorff space
    and~$x \in X$.
    Then~\emph{point evaluation}
        $\delta_x\colon C(X) \to \R$ defined by~$\delta_x(f) = f(x)$
        is a Riesz homomorphism into~$\mathbb{R}$
        with~$\delta_x(\mathbb{1})=1$.
        Every unit-preserving Riesz homomorphism~$C(X) \to \R$
            is of this form:
\begin{prop} \label{phic-are-points}
Let~$X$ be a compact Hausdorff space.
For any Riesz homomorphism~$\varphi \colon C(X) \to \R$
    with~$\varphi(\mathbb{1})=1$
    there is a point~$x \in X$
    with~$\varphi = \delta_x$.
\end{prop}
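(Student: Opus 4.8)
The plan is to work with the kernel $D := \{\, f \in C(X) ;\ \varphi(f) = 0 \,\}$, which is by definition a Riesz ideal, and which is proper since $\varphi(\mathbb{1}) = 1 \neq 0$. The heart of the argument is to show that all functions in $D$ share a common zero: there is a point $x \in X$ with $f(x) = 0$ for every $f \in D$. Granting this, the proposition follows immediately: for an arbitrary $f \in C(X)$ the function $f - \varphi(f)\mathbb{1}$ lies in $D$, because $\varphi(f - \varphi(f)\mathbb{1}) = \varphi(f) - \varphi(f)\varphi(\mathbb{1}) = 0$; evaluating at the common zero $x$ gives $f(x) - \varphi(f) = 0$, i.e.\ $\varphi(f) = f(x) = \delta_x(f)$. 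So $\varphi$ is point evaluation at $x$.

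To produce the common zero I would argue by contradiction using compactness of $X$. Suppose no such point exists. Then for every $x \in X$ there is some $f_x \in D$ with $f_x(x) \neq 0$; replacing $f_x$ by $|f_x|$ — which is again in $D$, since $D$, being a Riesz ideal, is a linear subspace closed under the absolute value — we may assume $f_x \geq 0$ and $f_x(x) > 0$. By continuity the set $U_x := \{\, y \in X ;\ f_x(y) > 0 \,\}$ is an open neighbourhood of $x$, and the $U_x$ cover $X$. By compactness finitely many $U_{x_1}, \dots, U_{x_n}$ already cover $X$; set $g := f_{x_1} \vee \dots \vee f_{x_n}$. Then $g \in D$ (a Riesz subspace is closed under finite suprema) and $g(y) > 0$ for every $y \in X$, as each $y$ lies in some $U_{x_i}$.

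Now $g$ is a strictly positive continuous function on the compact space $X$, so it attains a minimum $c := \min_{y \in X} g(y) > 0$; hence $0 \leq \mathbb{1} \leq c^{-1} g$ with $c^{-1} g \in D$. By Lemma~\ref{lem:rieszideal}(3) — a Riesz ideal is downward closed among positive elements — we conclude $\mathbb{1} \in D$, contradicting $\varphi(\mathbb{1}) = 1$. Therefore the common zero $x$ exists, completing the proof. (The hypothesis $\varphi(\mathbb{1}) = 1$ already forces $X$ to be non-empty, so there is no degenerate case to handle separately.)

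I expect the main obstacle, conceptually, to be the passage from ``$D$ has no common zero'' to ``$\mathbb{1} \in D$'': the right way to see it is that the zero sets of the absolute values of functions in $D$ form a family of closed subsets of $X$ with empty intersection, so by compactness some finite subfamily already has empty intersection, and the supremum of the corresponding functions then dominates a positive multiple of $\mathbb{1}$ — at which point the ideal property of $D$ forces $\mathbb{1} \in D$. The remaining steps are routine manipulations with the Riesz-space facts established earlier.
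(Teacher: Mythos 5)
Your proof is correct and follows essentially the same route as the paper: both reduce the claim to finding a common zero of the functions annihilated by $\varphi$, use compactness together with closure of the kernel under finite suprema, and derive the contradiction from a strictly positive kernel element dominating a positive multiple of $\mathbb{1}$. The only differences are cosmetic --- you organize the compactness step as a proof by contradiction via the ideal $\ker\varphi$ and Lemma~\ref{lem:rieszideal}, where the paper argues directly with the functions $|f-\varphi(f)\mathbb{1}|$ and their supports.
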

\begin{proof}
    For any~$f \in C(X)$
        define~$\tilde{f} := | f - \varphi(f)\mathbb{1}|$.
    If we can find an~$x_0 \in X$
    with~$x_0 \notin \supp \tilde{f} := \{x;\ \tilde{f}(x) \neq 0 \}$
        for all~$f \in C(X)$,
        then we are done
            as~$\tilde{f}(x_0)=0$
            implies~$\varphi(f) = f(x_0) = \delta_{x_0}(f)$.
Note that by definition~$\tilde{f} \geq 0$
                and~$\varphi(\tilde{f}) = 0$.
    For a single~$f \in C(X)$ there must be an~$x\in X$
    with~$x \notin \supp \tilde{f}$ (i.e.~$\tilde{f}(x) = 0$)
        for otherwise~$\tilde{f} > 0$ and
        so by compactness~$\tilde{f} > \varepsilon \mathbb{1}$
            for some~$\varepsilon > 0$
            which gives~$0 = \varphi(\tilde{f}) > \varepsilon$ quod non.
    Now, let~$f_1,\ldots, f_n \in C(X)$ be given.
    Write~$g := \tilde{f}_1 \vee \ldots \vee \tilde{f}_n$.
    Note~$\varphi(g) = 0$
        and so there must also be an~$x \in X$ with~$g(x) = 0$.
As~$\supp g = \supp  \tilde{f}_1 \cup \ldots \cup \supp \tilde{f}_n$
        we see~$x \notin \supp \tilde{f}_i$ for every~$1 \leq i \leq n$.
Thus no finite subset of the
family of open sets~$\mathcal{A} := \{ \supp \tilde{f};\ f \in C(X)  \}$ can
        cover~$X$. Thus by contraposition of compactness we
        see that~$\cup \mathcal{A}$ cannot cover~$X$.
    Hence there is an~$x_0 \in X$ with~$x_0 \notin \supp \tilde{f}$ for
    any~$f\in C(X)$ as desired.
\end{proof}
This Proposition is a cornerstone of the duality. Before
we will look into that, we will proof a variant
of the Stone-Weierstrass theorem in the language of Riesz spaces.
\begin{thm}[Stone-Weierstrass]\label{thm:stoneweierstrass}
For any compact Hausdorff space~$X$ and Riesz subspace~$D$
of~$C(X)$ such that:
\begin{enumerate}
\item
    $D$ is unital --- that is: $\mathbb{1} \in D$
\item
    $D$ separates the points --- that is:
        for all~$x, y\in X$ with~$x\neq y$
        there is an~$f \in D$ with~$f(x) \neq f(y)$.
\end{enumerate}
Then:~$D$ is~$\|\ \|_{\mathbb{1}}$-dense in~$C(X)$.
\end{thm}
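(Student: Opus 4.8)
The plan is to prove the classical Stone--Weierstrass theorem by the standard lattice-theoretic argument, which fits naturally in the Riesz-space language. First I would reduce the problem to a local-to-global statement: it suffices to show that for every $g \in C(X)$ and every $\varepsilon > 0$ there is an $f \in D$ with $\|f - g\|_{\mathbb{1}} \leq \varepsilon$, i.e.\ $g - \varepsilon\mathbb{1} \leq f \leq g + \varepsilon\mathbb{1}$ pointwise. The key observation is that if for every pair of points $x, y \in X$ we can find an element of $D$ agreeing with $g$ at both $x$ and $y$, then we can glue these together using the lattice operations of $D$.

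\textbf{Step 1: two-point interpolation.} For fixed $x, y \in X$ (with $x \neq y$; the case $x = y$ is trivial) I would produce an element $h_{x,y} \in D$ with $h_{x,y}(x) = g(x)$ and $h_{x,y}(y) = g(y)$. Since $D$ separates points, choose $e \in D$ with $e(x) \neq e(y)$; then the appropriate real-linear combination $h_{x,y} := \alpha e + \beta \mathbb{1}$ does the job, using $\mathbb{1} \in D$ and closure of $D$ under the vector space operations. (Note that $D$, being a Riesz subspace, is in particular a linear subspace.)

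\textbf{Step 2: fix $x$, vary $y$, take an infimum.} Fix $x \in X$. For each $y \in X$ the set $U_y := \{ z \in X;\ h_{x,y}(z) < g(z) + \varepsilon \}$ is open and contains $y$ (since $h_{x,y}(y) = g(y)$) and contains $x$ (since $h_{x,y}(x) = g(x)$). By compactness finitely many $U_{y_1}, \ldots, U_{y_n}$ cover $X$; put $h_x := h_{x,y_1} \wedge \cdots \wedge h_{x,y_n} \in D$ (here I use that $D$ is closed under finite infima). Then $h_x(z) < g(z) + \varepsilon$ for all $z \in X$, while $h_x(x) = g(x)$ because every $h_{x,y_i}$ takes the value $g(x)$ at $x$.

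\textbf{Step 3: vary $x$, take a supremum.} Now for each $x$ the set $V_x := \{ z \in X;\ h_x(z) > g(z) - \varepsilon \}$ is open and contains $x$. By compactness cover $X$ by $V_{x_1}, \ldots, V_{x_m}$ and set $f := h_{x_1} \vee \cdots \vee h_{x_m} \in D$. Then $f(z) > g(z) - \varepsilon$ for all $z$ (from the $V_{x_j}$) and $f(z) < g(z) + \varepsilon$ for all $z$ (since each $h_{x_j}$ satisfies this, and a supremum of functions all below $g + \varepsilon$ is still below $g + \varepsilon$). Hence $\|f - g\|_{\mathbb{1}} \leq \varepsilon$, and since $\varepsilon > 0$ was arbitrary, $g$ lies in the $\|\ \|_{\mathbb{1}}$-closure of $D$.

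I do not expect a serious obstacle here: the argument is the textbook one, and the only points requiring care are (i) remembering that a Riesz subspace is automatically closed under finite infima and suprema and under the vector-space operations, so all the elements constructed genuinely lie in $D$, and (ii) checking that pointwise inequalities of continuous functions translate correctly into $\|\ \|_{\mathbb{1}}$-estimates via $|f| \leq \lambda\mathbb{1} \iff \|f\|_{\mathbb{1}} \leq \lambda$, which holds because $C(X)$ is Archimedean. If anything, the mildly delicate part is bookkeeping the two nested compactness arguments so that the ``agrees with $g$ at the base point'' property survives each lattice operation.
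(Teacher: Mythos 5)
Your proposal is correct and follows essentially the same lattice-theoretic argument as the paper: a two-point (or one-point) interpolation step followed by two nested compactness arguments, first taking finite infima in $D$ to get an element agreeing with $g$ at a base point and bounded above by $g + \varepsilon\mathbb{1}$, then finite suprema to secure the lower bound. The only cosmetic difference is that you interpolate $g$ at both points via $\alpha e + \beta\mathbb{1}$, which lets you skip the paper's preliminary reduction to $g \geq 0$.
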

\begin{proof}
    Let~$g \in C(X)$ and~$\varepsilon > 0$ be given.
    We will find a~$g_0 \in D$
    with~$\| g-g_0\|_{\mathbb{1}} \leq \varepsilon$.
    By approximating~$g^+$ and~$-g^-$ separately
        we see we may assume without loss of generality~$g \geq 0$.
    We will approximate~$g$ progressively in three steps.

    Let~$y, z \in X$ be any points with~$y \neq z$.
    By assumption there is an~$f \in D$ with~$f(z) \neq f(y)$.
    Define~$f_y := \frac{g(z)}{f(z)-f(y)}(f - f(y) \mathbb{1})$.
    Note~$f_y(y)=0$, $f_y(z)=g(z)$ and still~$f_y \in D$.

    Write~$U_{f_y} := \{ x;\ x \in X;\ f_y(x) < g(x) +
        \frac{\varepsilon}{2}\}$.
    Clearly both~$z, y \in U_{f_y}$.
    Thus the~$U_{f_y}$ are an open cover of~$X$.
    By compactness we can find~$n \in \N$ and~$y_1, \ldots, y_n$
        such that~$U_{y_1} \cup \cdots \cup U_{y_n} = X$.
    Define~$f'_{z} := f_{y_1} \wedge \cdots \wedge f_{y_n}$.
    The map~$f'_{z} \in D$ is a better approximation:
    $f'_{z} (z) = g(z)$
    and~$f'_{z} \leq g + \frac{\varepsilon}{2} \mathbb{1}$.

    Write~$V_{f'_z} := \{x;\  g(x) - \frac{\varepsilon}{2} < f'_z(x) \}$,
        which are also open sets with~$z \in V_{f'_z}$.
    By compactness we can find~$m \in \N$
        and~$z_1, \ldots, z_m$
        with~$V_{f'_{z_1}} \cup \cdots \cup V_{f'_{z_m}} = X$.
        Now define~$g_0 := f'_{z_1} \vee \cdots \vee f'_{z_m}$.
        Note~$g - \frac{\varepsilon}{2} \mathbb{1} \leq  g_0
        \leq g + \frac{\varepsilon}{2} \mathbb{1} $
        and so~$\|g_0 - g\|_{\mathbb{1}} \leq \varepsilon$.
\end{proof}
\section{The spectrum}
We can turn a compact Hausdorff space~$X$
into an Archimedean uniformly complete
Riesz space~$C(X)$ --- how do we go the other way?
\begin{dfn}
    Let~$E$ be an Archimedean Riesz space with a unit~$u$.
    The \keyword{spectrum} of~$E$ is defined to be
    the set
    \begin{equation*}
    \Phi(E) := \{ \varphi;\ \varphi\colon E \to \mathbb{R} \ 
    \text{Riesz homomorphism with } \varphi(u)=1 \}.
    \end{equation*}
    which is a topological space with induced
        topology of the inclusion~$\Phi(E) \subseteq  \R^E$
            where we take the product topology on the latter.
\end{dfn}
At first glance, the spectrum seems to depend on the choice of unit. However:
\begin{cla}
    Let~$E$ be an Archimedean Riesz space with units~$e,u \in E$.
    We will write~$\Phi_e(E)$ and~$\Phi_u(E)$ for the spectra
        respective to~$e$ and~$u$.\\
    Then: $\Phi_e(E) \cong \Phi_u(E)$
    via~$\varphi \mapsto (x \mapsto \frac{\varphi(x)}{\varphi(u)})$.
\end{cla}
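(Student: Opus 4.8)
The plan is to exhibit the map $\alpha\colon \Phi_e(E) \to \Phi_u(E)$, $\varphi \mapsto (x \mapsto \varphi(x)/\varphi(u))$ together with its inverse $\beta\colon \psi \mapsto (x \mapsto \psi(x)/\psi(e))$, check that both are well-defined, that they are mutually inverse, and that both are continuous; since $\Phi_e(E)$ and $\Phi_u(E)$ are topological subspaces of $\R^E$, continuity can be checked coordinatewise.

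First I would verify that $\alpha$ is well-defined. Fix $\varphi \in \Phi_e(E)$, so $\varphi$ is a Riesz homomorphism with $\varphi(e) = 1$. Since $u$ is a unit, $|u| = u \ge 0$ lies above some positive multiple of $e$ — more precisely, by Proposition~\ref{r-embedding-prop}-style reasoning, or directly: $u \ge 0$ so $\varphi(u) \ge 0$, and if $\varphi(u) = 0$ then for every $x \in E$ we have $|x| \le n u$ for some $n$, hence $|\varphi(x)| = \varphi(|x|) \le n\varphi(u) = 0$, forcing $\varphi = 0$ and contradicting $\varphi(e) = 1$. So $\varphi(u) > 0$. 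Then $x \mapsto \varphi(x)/\varphi(u)$ is a positive scalar multiple of the Riesz homomorphism $\varphi$, hence again a Riesz homomorphism (scaling by a positive real preserves finite infima and suprema, by Lemma~\ref{lem:basics}), and it sends $u$ to $1$; so $\alpha(\varphi) \in \Phi_u(E)$. By the symmetric argument $\beta$ is well-defined.

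Next I would check $\beta \after \alpha = \id$ and $\alpha \after \beta = \id$. Given $\varphi \in \Phi_e(E)$, write $\psi = \alpha(\varphi)$, so $\psi(x) = \varphi(x)/\varphi(u)$; then $\psi(e) = \varphi(e)/\varphi(u) = 1/\varphi(u)$, and $\beta(\psi)(x) = \psi(x)/\psi(e) = (\varphi(x)/\varphi(u)) \cdot \varphi(u) = \varphi(x)$, so $\beta(\alpha(\varphi)) = \varphi$. The other composite is symmetric. Finally, for continuity: the topology on $\R^E$ is the product topology, so a map into it is continuous iff each coordinate is; the $x$-th coordinate of $\alpha(\varphi)$ is $\varphi(x)/\varphi(u)$, which is a continuous function of $\varphi \in \Phi_e(E) \subseteq \R^E$ since it is built from the (continuous) coordinate projections $\varphi \mapsto \varphi(x)$ and $\varphi \mapsto \varphi(u)$ by division, and the denominator is nonzero throughout $\Phi_e(E)$ as shown above. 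Hence $\alpha$ is continuous, and $\beta$ is continuous by the same reasoning; so $\alpha$ is a homeomorphism.

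The only point requiring genuine care — and the one I would flag as the crux — is the argument that $\varphi(u) \ne 0$, since without it neither the map nor the claimed inverse even makes sense; everything else is bookkeeping with the universal property of the product topology and the fact that positive rescaling of a Riesz homomorphism is again one. I expect no serious obstacle beyond this.
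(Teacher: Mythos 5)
Your proposal is correct and complete; note that the paper states this Claim without proof, so there is nothing to diverge from --- the map is prescribed in the statement and your argument supplies exactly the needed details (that $\varphi(u)>0$ via the unit property so the map is well defined and lands in $\Phi_u(E)$, the two composites are the identity, and continuity in each coordinate of the product topology). The observation that $\varphi(u)=0$ would force $\varphi=0$, contradicting $\varphi(e)=1$, is indeed the one non-trivial point, and it is handled correctly.
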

\begin{lem}\label{lem:specishsdf}
    The spectrum~$\Phi(E)$ is a compact Hausdorff space.
\end{lem}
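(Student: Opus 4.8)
The plan is to realise $\Phi(E)$ as a closed subspace of a product of compact intervals, so that compactness drops out of Tychonoff's theorem, while the Hausdorff property is inherited from the ambient product $\R^E$.

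First I would record a uniform bound on the elements of the spectrum. Since $E$ is Archimedean we have $|x| \leq \|x\|_u\, u$ for every $x \in E$, and any $\varphi \in \Phi(E)$ is a positive map (if $x \geq 0$ then $x = x \vee 0$, so $\varphi(x) = \varphi(x) \vee \varphi(0) = \varphi(x)\vee 0 \geq 0$) which preserves absolute values ($|\varphi(x)| = \varphi(x)\vee\varphi(-x) = \varphi(x\vee(-x)) = \varphi(|x|)$). Hence $|\varphi(x)| = \varphi(|x|) \leq \|x\|_u\,\varphi(u) = \|x\|_u$, so
\[
    \Phi(E) \ \subseteq\ K \ :=\ \prod_{x \in E} \bigl[-\|x\|_u,\, \|x\|_u\bigr] \ \subseteq\ \R^E ,
\]
and $K$ is compact by Tychonoff's theorem. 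As $\R^E$ with the product topology is a product of Hausdorff spaces it is Hausdorff, and therefore so is its subspace $\Phi(E)$ — irrespective of compactness. It thus remains only to show that $\Phi(E)$ is closed in $\R^E$ (equivalently in $K$), after which the closed-subset-of-compact argument finishes the proof.

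To see that $\Phi(E)$ is closed I would write it as an intersection of zero sets of continuous functions. The coordinate projections $\pi_x \colon \R^E \to \R$, $\pi_x(\varphi) = \varphi(x)$, are continuous, and so, using that addition, scalar multiplication and $\vee = \max$ are continuous on $\R$, each of the following subsets of $\R^E$ is closed: for all $x,y \in E$, the set $\{\varphi : \pi_{x+y}(\varphi) - \pi_x(\varphi) - \pi_y(\varphi) = 0\}$; for all scalars $\lambda$ and all $x \in E$, the set $\{\varphi : \pi_{\lambda x}(\varphi) - \lambda\,\pi_x(\varphi) = 0\}$; for all $x,y \in E$, the set $\{\varphi : \pi_{x\vee y}(\varphi) - \pi_x(\varphi)\vee\pi_y(\varphi) = 0\}$; and the set $\{\varphi : \pi_u(\varphi) = 1\}$. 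A map lying in all of these is linear, preserves binary suprema and sends $u$ to $1$; by $x\wedge y = x + y - x\vee y$ (Lemma~\ref{lem:basics}) it then also preserves binary infima, hence all finite infima and suprema by induction, so the intersection of these closed sets is precisely $\Phi(E)$.

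I do not expect a genuine obstacle here; the two points deserving care are the uniform bound $|\varphi(x)| \leq \|x\|_u$, which is exactly where Archimedeanness is used, and the observation that being a unit-preserving Riesz homomorphism is equivalent to the conjunction of the closed conditions ``linear'' and ``preserves binary $\vee$'' and ``fixes $u$'', so that no infinitary condition is needed to cut $\Phi(E)$ out of $\R^E$.
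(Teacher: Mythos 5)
Your proposal is correct and follows essentially the same route as the paper: embed $\Phi(E)$ into $\prod_{x}[-\|x\|_u,\|x\|_u]$, invoke Tychonoff, and show $\Phi(E)$ is closed in $\R^E$ because the defining conditions are continuous in the coordinates. The only (cosmetic) difference is that you exhibit $\Phi(E)$ as an intersection of closed sets while the paper uses a convergent-net argument, and you spell out the bound $|\varphi(x)|\leq\|x\|_u$ which the paper leaves implicit.
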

\begin{proof}
    As for any~$x \in E$
    it holds~$-\|x\|_u \leq \varphi(x) \leq \|x\|_u$
    we have the following inclusions
        of topological spaces
        \begin{equation*}
            \Phi(E) \ \subseteq\  \prod_{x \in E} [-\|x\|_u, \|x\|_u]
                    \ \subseteq\  \R^E.
        \end{equation*}
    The middle one is compact by Tychonoff's Theorem theorem.
    Thus if we can show~$\Phi(E)$
    is closed in~$\R^E$, we know~$\Phi(E)$ is compact and Hausdorff.

    Let~$(\varphi_\alpha)_\alpha$ in~$\Phi(E)$ be a net
        converging to some~$\varphi$ in~$\R^E$.
    As addition,  infimum, supremum and scalar multiplication
        are continuous the map~$\varphi$ must be a Riesz homorphism as well.
        Trivially~$\varphi(u) = \lim_\alpha \varphi_\alpha(u) = 1$.
        Thus indeed~$\Phi(E)$ is closed in~$\R^E$ as desired.
\end{proof}

\section{Yosida duality}
We are almost ready to state and prove Yosida duality.
First a notational matter:
recall the objects of~$\mathsf{CAURiesz}$
are pairs~$(E, u)$ of a uniformly complete Archimedean Riesz space~$E$
    together with a distinguished unit~$u \in E$.
For brevity we'll simply write~$E$ and denote the implicit
    distinguished unit by~$u_E$.
\begin{thm}[Yosida]
$C$ and~$\Phi$ are categorically dual:
\begin{enumerate}
\item
The map~$X \mapsto C(X)$
extends to a functor~$\mathsf{CHsdf} \to \mathsf{CAURiesz}^\partial$
taking~$\mathbb{1}$ as unit on~$C(X)$
and sending a continuous map~$f\colon X \to Y$
to the Riesz homomorphism~$C(f) \colon C(Y) \to C(X)$
given by~$C(f)(g) = g \after f$.

\item
    The map~$E \mapsto \Phi(E)$ extends to a functor~$\mathsf{CAURiesz}
        \to \mathsf{CHsdf}^\partial$
by mapping a Riesz homomorphism~$f \colon E \to E'$
to~$\Phi(f)\colon \Phi(E') \to \Phi(E)$
given by~$\Phi(f)(\varphi) = \varphi \after f$.

\item
We have~$C\Phi \cong \id$ (every uniformly complete Archimedean Riesz
space is naturally isomorphic to a continuous
    function space) and~$\Phi C \cong \id$
    (every compact Hausdorff space is naturally isomorphic to
    the spectrum of an Archimedean Riesz space)
    and so~$\mathsf{CHsdf}$ and~$\mathsf{CAURiesz}$
    are dually equivalent.
\end{enumerate}
\end{thm}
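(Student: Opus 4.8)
The strategy is to verify each of the three claims in turn, with the bulk of the work going into establishing the two natural isomorphisms in part~3. For part~1, I would first check that $C(f)$ is well-defined (composition with a continuous map preserves continuity), that it is linear and preserves finite infima and suprema (all pointwise, hence inherited from $\R$), and that $C(f)(\mathbb{1}_Y) = \mathbb{1}_X$; functoriality ($C(\id_X) = \id$ and $C(g \after f) = C(f) \after C(g)$) is then immediate from associativity of composition, and the contravariance is exactly why the codomain is $\mathsf{CAURiesz}^\partial$. For part~2, given a Riesz homomorphism $f\colon E \to E'$ and $\varphi \in \Phi(E')$, the composite $\varphi \after f\colon E \to \R$ is a Riesz homomorphism with $(\varphi \after f)(u_E) = \varphi(f(u_E)) = \varphi(u_{E'}) = 1$, so $\Phi(f)$ lands in $\Phi(E)$; continuity of $\Phi(f)$ follows because it is the restriction of the continuous map $\R^{E'} \to \R^E$ dual to $f$ (a projection-wise continuous map between product spaces), and functoriality is again formal.

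The heart of the matter is part~3. For $\Phi C \cong \id$: given a compact Hausdorff space $X$, I would define $\eta_X\colon X \to \Phi(C(X))$ by $\eta_X(x) = \delta_x$. Proposition~\ref{phic-are-points} says $\eta_X$ is surjective; injectivity is Urysohn's lemma (compact Hausdorff spaces are normal, so distinct points are separated by a continuous function); continuity of $\eta_X$ is clear since each coordinate $x \mapsto \delta_x(f) = f(x)$ is continuous; and a continuous bijection from a compact space to a Hausdorff space is a homeomorphism. Naturality — that $\Phi(C(f)) \after \eta_X = \eta_Y \after f$ for $f\colon X \to Y$ — is a one-line check: both sides send $x$ to $g \mapsto g(f(x))$.

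For $C\Phi \cong \id$: given $E \in \mathsf{CAURiesz}$ with unit $u_E$, define $\epsilon_E\colon E \to C(\Phi(E))$ by $\epsilon_E(a)(\varphi) = \varphi(a)$. This is a Riesz homomorphism (evaluation is a Riesz homomorphism in each coordinate, and infima/suprema in $C(\Phi(E))$ are pointwise) sending $u_E$ to $\mathbb{1}$. Injectivity is exactly Proposition~\ref{r-embedding-prop}: if $a \neq 0$ there is $\varphi \in \Phi(E)$ with $\varphi(a) \neq 0$. The image $\epsilon_E(E)$ is a unital Riesz subspace of $C(\Phi(E))$ that separates points — if $\varphi \neq \psi$ in $\Phi(E)$ then they differ on some $a$, i.e.\ $\epsilon_E(a)(\varphi) \neq \epsilon_E(a)(\psi)$ — so by the Stone–Weierstrass Theorem~\ref{thm:stoneweierstrass} it is $\|\ \|_{\mathbb{1}}$-dense in $C(\Phi(E))$. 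The remaining step, which I expect to be the \textbf{main obstacle}, is showing $\epsilon_E$ is \emph{onto}, i.e.\ that its image is already all of $C(\Phi(E))$ rather than merely dense. This is where uniform completeness of $E$ is used: $\epsilon_E$ is an isometry from $(E, \|\ \|_{u_E})$ onto its image (because $\|a\|_{u_E} = \sup_{\varphi} |\varphi(a)| = \|\epsilon_E(a)\|_{\mathbb{1}}$, the first equality again from Proposition~\ref{r-embedding-prop} together with $|a| \leq \|a\|_{u_E} u_E$), so its image is a complete, hence closed, subspace of $C(\Phi(E))$; being both dense and closed, it is everything. Hence $\epsilon_E$ is a bijective Riesz homomorphism whose inverse is also a Riesz homomorphism, i.e.\ a $\mathsf{CAURiesz}$-isomorphism. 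Naturality of $\epsilon$ — that $C(\Phi(f)) \after \epsilon_E = \epsilon_{E'} \after f$ for $f\colon E \to E'$ — is the routine check that both sides send $a \in E$ to $\varphi \mapsto \varphi(f(a))$. With both $\eta$ and $\epsilon$ natural isomorphisms, $C$ and $\Phi$ are mutually inverse equivalences up to natural isomorphism, establishing the dual equivalence of $\mathsf{CHsdf}$ and $\mathsf{CAURiesz}$.
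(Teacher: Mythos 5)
Your proposal is correct and follows essentially the same route as the paper: functoriality by pre-composition, $\Phi C\cong\id$ via $x\mapsto\delta_x$ using Proposition~\ref{phic-are-points}, Urysohn and the compact-to-Hausdorff bijection argument, and $C\Phi\cong\id$ via evaluation, with injectivity from Proposition~\ref{r-embedding-prop}, density from Stone--Weierstrass, and surjectivity from uniform completeness. Your explicit isometry claim $\|a\|_{u_E}=\sup_\varphi|\varphi(a)|$ is a welcome sharpening of a step the paper leaves implicit when it passes from ``dense and complete'' to ``equal''.
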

\begin{proof}
We first show 1 and 2.
\begin{enumerate}
\item
    We already saw~$C(X)$ is in~$\mathsf{CAURiesz}$
    (Example~\ref{exa:CXisCAU}
    and that~$\Phi(E)$ is in~$\mathsf{CHsdf}$
    (Lemma~\ref{lem:specishsdf}).
    Maps on arrows defined by
    pre-composition (such as~$C$ and~$\Phi$)
    are automatically functorial.
\item
    Clearly~$C(f)(g)\equiv g \after f$ is continuous
        as it is the composition of two continuous maps.
    To finish 1, we still have to check~$C(f)$ is a Riesz
        homomorphism.
    It is additive:~$C(f)(g+g')(x)
                = (g+g') \after f (x)
                = g(f(x)) + g'(f(x))
                = C(f)(g)(x) + C(f)(g)(x)$.
        In a similar fashion one checks~$C(f)$
         preserves scalar multiplication, $\vee$ and~$\wedge$.
    Unit-preservation is different, but
        also simple:~$C(f)(\mathbb{1})(x) = \mathbb{1} \after f (x)
                            = 1 = \mathbb{1}(x)$.

\item
    To wrap up 2 we need to check~$\Phi(f)$ is a continuous map
        for all~$f\colon E \to E'$.
    Let~$(\varphi_\alpha)_\alpha$ be a net in~$\Phi(E')$
        converging to~$\varphi$.
    In particular
        we have for any~$x\in E$ that~$\varphi_\alpha(f(x)) \to \varphi(f(x))$.
        Hence~$\Phi(f)(\varphi_\alpha) \equiv \varphi_\alpha \after f \to \varphi \after f \equiv \Phi(f)(\varphi)$
            in~$\Phi(E)$, as desired.
\end{enumerate}
Now we turn to the proof of~$C\Phi \cong \id$.
Let $E$ be in~$\textsf{CAURiesz}$.
\begin{enumerate}[resume]
\item
First we define what will turn out to be an isomorphism.        
For~$x \in E$, define~$\hat{x}$ (initially in~$\R^{\Phi(E)}$)
by~$\hat{x}(\varphi) := \varphi(x)$.
To prove~$\hat{x}$ is continuous,
assume~$\varphi_\alpha \to \varphi$ is a converging net
in~$\Phi(E)$. Then~$\hat{x}(\varphi_\alpha)
    = \varphi_\alpha(x) \rightarrow \varphi(x)
= \hat{x}(\varphi)$, thus~$\hat{x} \in C\Phi(E)$.
For the moment, write~$\hat{E}:=\{\hat{x};\ x \in E\}$.
\item
Note that~$(\hat{x} + \hat{y})(\varphi) = \varphi(x + y)
= \varphi(x) + \varphi(y) = \hat{x}(\varphi) + \hat{y}(\varphi)$
and thus~$\hat{x} + \hat{y} = \widehat{x+y} \in \hat{E}$.
The same reasoning applies to~$\cdot$, $\wedge$ and~$\vee$.
Thus~$x \mapsto \hat{x}$ is a Riesz-homomorphism
and~$\hat{E}$, being its image, is
a Riesz subspace of~$C\Phi(E)$.
\item
Suppose~$x \in E$ and~$x \neq 0$.
Then by Proposition~\ref{r-embedding-prop}, there is a~$\varphi \in \Phi(E)$
such that~$\varphi(x) \neq 0$. Thus~$\hat{x} \neq 0$. 
Thus~$x \mapsto \hat{x}$
is injective. Consequently~$E \cong \hat{E}$.
\item
Given~$\varphi, \psi \in \Phi(E)$ with~$\varphi \neq \psi$.
By definition there is an~$x \in E$
such that~$\varphi(x) \neq \psi(x)$.
Together with~$\hat{u}_E(\varphi) = \varphi(u_E) = \mathbb{1}$,
we conclude with the theorem of Stone-Weierstrass
(see Thm.~\ref{thm:stoneweierstrass}) that~$\hat{E}$
is $\|\ \|_{\mathbb{1}}$-dense in~$C\Phi(E)$. Since~$E \cong \hat{E}$ is
uniformly complete (i.e. $\|\ \|_{\mathbb{1}}$-complete)
we find~$C\Phi(E) \cong E$.

\item
We finish with naturality.
Assume~$f \colon E \to E'$ in~$\mathsf{CAURiesz}$, $x\in E$
and~$\varphi \in \Phi(E')$.
Expanding definitions we find~$C\Phi(f)(\hat{x})(\varphi)
= \hat{x}(\Phi(f) (\varphi)) = \hat{x}(\varphi \circ f)
= \varphi \circ f (x) = \widehat{f(x)}(\varphi)$.
Thus~$C\Phi \cong \id$.
\end{enumerate}
Now we prove the other direction: $\Phi C \cong \id$.
Let~$X \in \textsf{CHsdf}$ be given.
\begin{enumerate}[resume]
\item
Recall~$\delta_x \in \Phi C(X)$ defined by $\delta_x(f) \equiv f(x)$.
Write~$\Delta(X) := \{\delta_x;\  x\in X\}$.

\item
Suppose~$x_\alpha \rightarrow x$ in~$X$.
Then for~$f \in C(X)$ we have~$\delta_{x_\alpha}(f) = f(x_\alpha)
\rightarrow f(x) = \delta_{x}(f)$. Thus~$x \mapsto \delta_x$ is continuous.

\item
Let~$x,y\in X$ with~$x \neq y$ be given.
By Urysohn's Lemma there is an~$f\in C(X)$ such
that~$f(x) \neq f(y)$. Hence~$\delta_x \neq \delta_y$.
Thus~$x \mapsto \delta_x$ is injective.

\item
Assume~$\varphi \in \Phi C(X)$. By Proposition~\ref{phic-are-points}
there is an~$x \in X$ such that~$\varphi = \delta_x$.
Thus the map~$x \mapsto \delta_x$ is
surjective.
Together with the previous two points
    we see~$x \mapsto \delta_x$ is a continuous bijection
    between compact Hausdorff spaces and thus a homeomorphism.

\item
The demonstration of naturality is the same as in the previous Proposition.
Thus~$\Phi C \cong \id$ as desired.
\qedhere
\end{enumerate}        
\end{proof}

\subsection*{Acknowledgements}

Robert Furber kindly encouraged me to publish these notes.
I am grateful to Abraham Westerbaan
    for his suggestions.
\bibliography{main}{}
\bibliographystyle{alpha}

\end{document}